\renewcommand{\epsilon}{\varepsilon}
\DeclareMathOperator{\dvg}{div}
\DeclareMathOperator{\dist}{dist}
\DeclareMathOperator{\graph}{graph}
\DeclareMathOperator{\tr}{tr}
\DeclareMathOperator{\diam}{diam}
\def\pd{\partial}
\def\cd{\nabla}
\def\R{\mathbb{R}}
\def\N{\mathbb{N}}
\def\M{\mathcal{M}}
\newcommand{\inner}[2]{\left\langle#1,#2\right\rangle} 
\def\ba #1\ea {\begin{align} #1\end{align}}
\def\bann #1\eann {\begin{align*} #1\end{align*}}
\def\ben #1\een {\begin{enumerate} #1\end{enumerate}}
\def\bi #1\ei {\begin{itemize}\renewcommand\labelitemi{--} #1\end{itemize}}
\newtheorem{theorem}{Theorem}[section]
\newtheorem{lemma}[theorem]{Lemma}
\newtheorem{remark}[theorem]{Remark}
\newtheorem{corollary}[theorem]{Corollary}
\newtheorem{definition}[theorem]{Definition}
\newtheorem{claim}[theorem]{Claim}
\newtheoremstyle{TheoremNum}
        {\topsep}{\topsep}              
        {\itshape}                      
        {}                              
        {\bfseries}                     
        {.}                             
        { }                             
        {\thmname{#1}\thmnote{ \bfseries #3}}
    \theoremstyle{TheoremNum}
\title[Convex ancient solutions to MCF]{Convex ancient solutions to mean curvature flow}
\author{Theodora Bourni}
\author{Mat Langford}
\author{Giuseppe Tinaglia}
\thanks{The third author was partially supported by EPSRC grant no. EP/M024512/1}
\address{Department of Mathematics, University of Tennessee Knoxville, Knoxville TN, 37996-1320}
\email{tbourni@utk.edu}
\email{mlangford@utk.edu}
\address{Department of Mathematics, King's College London, London, WC2R 2LS, U.K.}
\email{giuseppe.tinaglia@kcl.ac.uk}
\date{\today}
\begin{document}

\begin{abstract}
X.-J.~Wang \cite{Wa11} proved a series of remarkable results on the structure of (non-compact) convex ancient solutions to mean curvature flow. Some of his results do not appear to be widely known, however, possibly due to the technical nature of his arguments and his exploitation of methods which are not widely used in mean curvature flow. In this expository article, we present Wang's structure theory and some of its consequences. We shall simplify some of Wang's analysis by making use of the monotonicity formula and the differential Harnack inequality, and obtain an important additional structure result by exploiting the latter. We conclude by showing that various rigidity results for convex ancient solutions and convex translators follow quite directly from the structure theory, including the new result of Corollary \ref{cor:bowl uniqueness}. We recently provided a complete classification of convex ancient solutions to curve shortening flow by exploiting similar arguments \cite{BLT3}.
\end{abstract}

\maketitle

\tableofcontents

\section{Introduction}

\thispagestyle{empty}

A smooth one-parameter family $\{\M^n_t\}_{t\in I}$ of smoothly immersed hypersurfaces $\M_t^n$ of $\R^{n+1}$ \emph{evolves by mean curvature flow} if there exists a smooth one-parameter family $X:M^n\times I\to\R^{n+1}$ of immersions $X(\cdot,t):M^n\to \R^{n+1}$ with $\M^n_t=X(M^n,t)$ satisfying
\[
\pd_tX(x,t)=\vec H(x,t)\quad\text{for all}\quad (x,t)\in M^n\times I\,,
\]
where $\vec H(\cdot,t)$ is the mean curvature vector field of $X(\cdot,t)$. 
Unless otherwise stated, we shall \emph{not} assume that $M^n$ is compact. 

We will refer to a hypersurface as (\emph{strictly}) \emph{convex} if it bounds a (strictly) convex body, \emph{locally uniformly convex} if its second fundamental form is positive definite and (\emph{strictly}) \emph{mean convex} if its mean curvature (with respect to a consistent choice of unit normal field) is  (positive) non-negative. We will refer to a mean curvature flow $\{\M^n_t\}_{t\in I}$ as compact/(strictly) convex/locally uniformly convex/mean convex if each of its timeslices $\M^n_t$ possesses the corresponding property.

An \emph{ancient} solution to mean curvature flow is one which is defined on a time interval of the form $I=(-\infty,T)$, where $T\leq \infty$. Unless otherwise stated. Ancient solutions are of interest due to their natural role in the study of high curvature regions of the flow since they arise as limits of rescalings about singularities \cite{HamiltonPinched,Hu90,Hu93,HuSi99b,HuSi99a,Wh00,Wh03}.

A special class of ancient solutions are the \emph{translating solutions}. As the name suggests, these are solutions $\{\M^n_{t}\}_{t\in(-\infty,\infty)}$ which evolve by translation: $\M^n_{t+s}=\M^n_t+se$ for some fixed vector $e\in \R^{n+1}$. The timeslices $\M^n_t$ of a translating solution $\{\M^n_{t}\}_{t\in(-\infty,\infty)}$ are all congruent and satisfy the \emph{translator equation}, 
\[
\vec H=e^\perp\,,
\]
where $\cdot^\perp$ denotes projection onto the normal bundle. 
Translating solutions arise as blow-up limits at \emph{type-II} singularities, which are still not completely understood \cite{HamiltonPinched,HuSi99b,HuSi99a} (although there is now a classification of mean convex examples in\footnote{The classification of all proper translating curves in the plane is left as an exercise.} $\R^3$ \cite{AlWu,BLT2,HIMW,SX,Wa11}). Understanding ancient and translating solutions is therefore of relevance to applications of the flow which require a controlled continuation of the flow through singularities. 

Ancient solutions to mean curvature flow also arise in conformal field theory, where, according to Bakas \cite{Bakas2}, they describe ``to lowest order in perturbation theory ... the ultraviolet regime of the boundary renormalization group equation of Dirichlet sigma models''. 

Further interest in ancient and translating solutions to geometric flows arise from their rigidity properties, which are analogous to those of complete minimal/CMC surfaces, harmonic maps and Einstein metrics; for example, when $n\geq 2$, under certain geometric conditions, 
the only convex ancient solutions to mean curvature flow are the shrinking spheres \cite{HaHe,HuSi15,Wa11}. We present a new proof of this fact in section \ref{sec:shrinking sphere}. 


Convex ancient solutions to mean curvature flow are closely related to convex translating solutions. 
\begin{itemize}
\item[(i)] Consider a \emph{complete} solution $u:\Omega\to \R$, $\Omega\subset \R^{n}$, to the \textsc{pde}
\ba\label{eq:LSF/Trans}
\dvg\left(\frac{Du}{\sqrt{\sigma^2+\vert Du\vert^2}}\right)=-\frac{1}{\sqrt{\sigma^2+\vert Du\vert^2}}\,.
\ea
When $\sigma=0$, $u$ is the \emph{arrival time}\footnote{In this case, the equation is degenerate at critical points of $u$. At such points, $u$ satisfies \eqref{eq:LSF/Trans} in an appropriate \emph{viscosity} sense but is \emph{a priori} only continuous. When the level sets of $u$ are convex and compact, results of Huisken \cite{Hu84,Hu93} imply that $u$ has has a single critical point, where it is $C^2$.} of a mean convex ancient solution to mean curvature flow in $\R^{n}$ (i.e. the level sets of $u$ form a mean convex ancient solution). In this case, \eqref{eq:LSF/Trans} is called the \emph{level set flow}. When $\sigma=1$, the graph of $u$ is a mean convex translator in $\R^{n+1}$ with bulk velocity $-e_{n+1}$. In this case, \eqref{eq:LSF/Trans} is called the \emph{graphical translator equation}. The converse is also true: every mean convex ancient solution to mean curvature flow (resp. proper, mean convex translating solution) gives rise to a complete solution to \eqref{eq:LSF/Trans} with $\sigma=0$ (resp. $\sigma=1$). 
This connection was exploited by X.-J.~Wang in \cite{Wa11}. We will not make use of it here.
\item[(ii)] If $\{\M^n_t\}_{t\in(-\infty,\infty)}$ is a convex translating solution, then the family of rescaled solutions $\{\lambda\M^n_{\lambda^{-2}t}\}_{t\in(-\infty,0)}$ converges to a (self-similarly shrinking) convex ancient solution as $\lambda\to0$. See Lemma \ref{lem:translator_asymptotics} below.
\item[(iii)] If $\{\M^n_t\}_{t\in(-\infty,T)}$ is a convex ancient solution and $X_j\in \M^n_{t_j}$ are a sequence of points such that $t_j\to-\infty$ and $\vert X_j\vert\to\infty$ as $j\to\infty$, then a subsequence of the translated family $\{\M^n_{t+t_j}-X_j\}_{t\in(-\infty,T-t_j)}$ converges to a convex translating solution as $j\to\infty$. See Lemma \ref{lem:ancient_asymptotics} below.
\end{itemize}

\section{Asymptotics for convex ancient solutions}\label{sec:ancient_asymptotics}



The following lemma describes the asymptotic properties of convex ancient solutions. It will be convenient to make use of the inverse $P$ of the the Gauss map, which is defined on a strictly convex solution by
\[
\nu(P(e,t),t)=e\,.
\]

\begin{lemma}[Asymptotic shape of convex ancient solutions]\label{lem:ancient_asymptotics}
Suppose that $\{\M^n_t\}_{t\in(-\infty,0)}$ is a convex ancient solution to mean curvature flow in $\R^{n+1}$.
\begin{itemize}
\item[\emph{(i)}] \emph{(Blow-up)} 
\mbox{}
\begin{itemize}
\item[\emph{(a)}] If $\{\M^n_t\}_{t\in(-\infty,0)}$ is compact and $0$ is its singular time, there exists a point $p\in \R^{n+1}$ such that the family of rescaled solutions $\{\lambda(\M^n_{\lambda^{-2}t}-p)\}_{t\in(-\infty,0)}$ converges uniformly in the smooth topology to the shrinking sphere $\{S^n_{\sqrt{-2nt}}\}_{t\in(-\infty,0)}$ as $\lambda\to\infty$.
\item[\emph{(b)}] If $0$ is a regular time of $\{\M^n_t\}_{t\in(-\infty,0)}$ and $0\in \M^n_0$, then the family of rescaled solutions $\{\lambda\M^n_{\lambda^{-2}t}\}_{t\in(-\infty,0)}$ converges locally uniformly in the smooth topology to the stationary hyperplane $\{X:\inner{X}{\nu(0)}=0\}_{t\in(-\infty,0)}$ as $\lambda\to\infty$. 
\end{itemize}
\item[\emph{(ii)}] \emph{(Blow-down)} There is a rotation $R\in \mathrm{SO}(n+1)$ such that the rescaled solutions $\{\lambda R\cdot \M^n_{\lambda^{-2}t}\}_{t\in(-\infty,0)}$ converge locally uniformly in the smooth topology as $\lambda\to 0$ to either
\begin{itemize}
    \item the shrinking sphere $\{S^{n}_{\sqrt{-2nt}}\}_{t\in(-\infty,0)}$,
    \item the shrinking cylinder $\{\R^k\times S^{n-k}_{\sqrt{-2(n-k)t}}\}_{t\in(-\infty,0)}$ for some $k\in\{1,\dots,n-1\}$, or
    \item the stationary hyperplane $\{\R^n\times\{0\}\}_{t\in(-\infty,0)}$ of multiplicity either one or two.
\end{itemize}
Furthermore:\smallskip
\begin{itemize}
\item[\emph{(a)}] If the limit is the shrinking sphere, then $\{\M^n_t\}_{t\in(-\infty,0)}$ is a shrinking sphere.\smallskip 
\item[\emph{(b)}] If the limit is the stationary hyperplane of multiplicity one, then $\{\M^n_t\}_{t\in(-\infty,0)}$ is a stationary hyperplane of multiplicity one.
\end{itemize}
\item[\emph{(iii)}] \emph{(Asymptotic translators)} If the second fundamental form of the solution $\{\M^n_t\}_{t\in(-\infty,0)}$ is bounded on each time interval $(-\infty,T]$, $T<0$, then, given any sequence of times $s_j\to-\infty$ and a normal direction $e\in \cap_{s\in(-\infty,0)}\nu(\M^n_{s})$, the translated solutions $\{\M^n_{t+s_j}-P_j\}_{t\in(-\infty,-s_j)}$, where $P_j:=P(e,s_j)$, converge locally uniformly in $C^\infty$, along some subsequence, to a convex translating solution which translates in the direction $-e$ with speed $\displaystyle\lim_{j\to-\infty}H(P_j,s_j)$.
\end{itemize}
\end{lemma}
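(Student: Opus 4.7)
The plan is to treat the three parts in order, drawing on Huisken's convergence theorem for compact convex mean curvature flow, Huisken's monotonicity formula together with his classification of convex self-shrinkers, and Hamilton's differential Harnack inequality. For part (i), case (a) follows directly from Huisken's round-point theorem: the flow contracts to a single point $p\in\R^{n+1}$ and, after rescaling about $p$, converges smoothly to the shrinking sphere. For case (b), since $0$ is a regular time and $0\in \M^n_0$, the flow has uniformly bounded geometry on a parabolic neighborhood of $(0,0)$. The rescaling $\lambda\to\infty$ zooms into this neighborhood, and local smoothness together with convexity forces the limit to be the stationary multiplicity-one tangent hyperplane at $0$.

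For part (ii), I would apply Huisken's monotonicity formula to $\{\M^n_t\}$ centered at a suitable space-time point. Combined with convexity, this gives subsequential self-similar limits of $\{\lambda\M^n_{\lambda^{-2}t}\}$ as $\lambda\to 0$, and Huisken's rigidity theorem for convex self-shrinkers classifies the limit as a generalized cylinder $\R^k\times S^{n-k}_{\sqrt{-2(n-k)t}}$, with multiplicity one or two possible in the planar case (as in a slab-like convex ancient solution); a suitable rotation $R\in\mathrm{SO}(n+1)$ aligns the cylindrical axis. Conclusions (a) and (b) follow from the equality case of the monotonicity formula: if the Gaussian density of $\{\M^n_t\}$ matches identically that of the sphere (respectively, the plane of multiplicity one), then the solution is itself self-similarly shrinking (respectively, a smooth plane); convexity then closes the identification.

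For part (iii), the uniform bound on $|A|$ over $(-\infty,T]$, combined with standard interior regularity, yields uniform bounds on all higher derivatives of $A$ on each compact space-time subset of the translated solutions $\{\M^n_{t+s_j}-P_j\}$. Subsequential smooth convergence delivers a convex ancient limit $\{\tilde\M^n_t\}$ whose time-$0$ slice contains the origin with normal $e$ there. To upgrade the limit to a translator, I would invoke Hamilton's Harnack inequality for convex mean curvature flow, $\pd_t H + 2V\cdot \cd H + h(V,V) \ge 0$, which after optimising over the tangent-vector parameter $V$ (at $V^{*}=-h^{-1}\cd H$) implies that $s\mapsto H(P(e,s),s)$ is monotone non-decreasing along the worldline tracking the point of Gauss image $e$. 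Since $|A|$ is bounded, $H(P(e,s_j),s_j)$ converges to some finite $h$, and $\tilde H$ is then constantly $h$ along the corresponding worldline of $\{\tilde\M^n_t\}$. The equality case of Hamilton's Harnack then propagates across the limit and identifies $\{\tilde\M^n_t\}$ as a translator in direction $-e$ with speed $h$.

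The main obstacle in each of parts (ii) and (iii) is the rigidity step. In (ii), one must use the equality case of the monotonicity formula together with convexity to pin down the ancient solution from its asymptotic shape, ruling out competing convex ancient solutions with the same Gaussian density as the sphere or the multiplicity-one plane. In (iii), upgrading ``constant mean curvature along a single worldline'' to the global translator identity requires careful application of the equality case of Hamilton's Harnack along the relevant Harnack geodesics, rather than as a merely pointwise equality.
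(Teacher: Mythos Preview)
Your approach is the paper's: Huisken's theorem for (i), the monotonicity formula plus Huisken's classification of convex self-shrinkers for (ii), and Hamilton's Harnack inequality in the Gauss-map parametrization for (iii). A few places where your sketch is incomplete or where you have misidentified the difficulty:

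For (ii), before the monotonicity argument can run you need $\Theta(t)$ to be uniformly bounded so that $\lim_{t\to-\infty}\Theta(t)$ exists. The paper proves $\Theta(t)\le C(n)$ directly from convexity by slicing $\R^{n+1}$ into dyadic annuli and using that a convex hypersurface meets a ball in area at most that of the boundary sphere; you should not omit this step. You also do not address uniqueness of the blow-down (independence of the sequence $\lambda_i\to 0$); the paper obtains this from convexity, since the region enclosed by any subsequential limit must contain that enclosed by any other.

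For (ii)(a) and (ii)(b), the obstacle you flag dissolves once you use part (i). If the blow-down is the sphere, the solution is compact, so by (i)(a) its blow-\emph{up} is also the sphere; then $\lim_{t\to 0^-}\Theta(t)=\lim_{t\to-\infty}\Theta(t)$ and monotonicity forces $\Theta$ constant, hence the solution is itself a self-shrinker and therefore the sphere. The multiplicity-one hyperplane case is identical, using (i)(b) at a regular point. There is nothing further to rule out.

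For (iii), the obstacle you flag is also not real. The Harnack inequality makes $s\mapsto H(P(e',s),s)$ non-decreasing for \emph{every} direction $e'$ in the Gauss image, not just the chosen $e$; since $|A|$ is bounded on $(-\infty,T]$, each of these limits exists, so on the limit flow $H$ is constant in $t$ with respect to the Gauss-map parametrization globally. That is exactly the hypothesis of the rigidity case of Hamilton's Harnack, which then yields a translator directly; no propagation from a single worldline is required.
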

\begin{proof}
(i) The first part is an immediate consequence of Huisken's Theorem \cite{Hu84}. The second is straightforward.

\noindent (ii) This was proved by Wang through a delicate analysis of the arrival time \cite{Wa11}. We will derive it as a simple consequence of the monotonicity formula: after a finite translation, we can arrange that the solution reaches the origin at time zero. Since the shrinking sphere also reaches the origin at time zero, it must intersect the solution at all negative times by the avoidance principle. Thus,
\[
\min_{q\in \M^n_t}\vert q\vert\le \sqrt{-2nt}\,.
\]
Since the speed, $H$, of the solution is bounded in any compact subset of $\R^{n+1}\times(-\infty,0)$ after the rescaling, given any sequence $\lambda_i\searrow 0$, we can find a subsequence along which $\{\lambda_i\M^n_{\lambda_i^{-2}t}\}_{t\in(-\infty,0)}$ converges locally uniformly in the smooth topology to a non-empty limit flow.

We will show that the subsequential limit flow is a self-similarly shrinking solution using Huisken's monotonicity formula \cite{Hu90}:
\begin{equation}\label{eq:monotonicity formula}
\frac{d}{dt}\Theta(t)=-\int_{\M^n_t}\left\vert \vec H(p)+\frac{p^\perp}{-2t}\right\vert^2\Phi(p,t)\,d\mathscr{H}^n(p)\,,
\end{equation}
where $\Phi(\cdot,t)$ is the Gau\ss ian density and $\Theta(t)$ the Gaussian area of $\M^n_t$ (based at the spacetime origin):
\[
\Theta(t):=\int_{\M^n_t}\Phi(p,t)\,d\mathscr{H}^n(p)\,,\;\; \Phi(p,t)\doteqdot\left(-4\pi t\right)^{-\frac{n}{2}}\mathrm{e}^{-\frac{\left\vert p\right\vert^{2}}{-4t}}\,.
\]

We claim that $\Theta(t)$ is bounded uniformly.
\begin{claim}\label{ex:bdd_rescaled_volume}
There exists $C=C(n)<\infty$ such that
\[
\sup_{\tau>0}\frac{1}{\tau^{n/2}}\int_{\M^n} e^{\frac{-|y|^2}{\tau}}d\mathscr{H}^n(y)<C
\]
for any convex hypersurface $\M^n$ of $\R^{n+1}$ and any $\tau>0$.
\end{claim}
\begin{proof}
Let $A_i= B_{(i+1)\sqrt k}(0)\setminus B_{i\sqrt k}(0)$ for $i\in \N$. Since $\M^n$ is convex,
\[
|\M^n\cap A_i|\le |\partial B_{(i+1)\sqrt k}(0)|= c_n(i+1)^n k^{n/2}
\]
and hence
\[
\begin{split}
\frac{1}{\tau^{n/2}}\int_{\M^n} e^{\frac{-|y|^2}{\tau}}d\mathscr{H}^n(y)={}&\frac{1}{ \tau^{n/2}}\sum_{i=0}^\infty\int_{\M^n\cap A_i} e^{\frac{-|y|^2}{\tau}}d\mathscr{H}^n(y)\\
\le{}&\frac{1}{ \tau^{n/2}}\sum_{i=0}^\infty c_n(i+1)^n \tau^{n/2} e^{-i^2}\\
={}&c_n\sum_{i=0}^\infty(i+1)^n e^{-i^2}\,,
\end{split}
\]
where the constant $c_n$ depends only on $n$. This proves the claim.
\end{proof}

It follows that $\Theta$ converges to some limit as $t\to-\infty$. But then
\begin{align*}
-\int_a^b\!\!\!\int_{\lambda \M^n_{\lambda^{-2}t}}\left\vert\vec H(p)+\frac{p^\perp}{-2t}\right\vert^2\Phi(p,t)\,d\mathscr{H}^n(p)={}&\Theta_\lambda(b)-\Theta_\lambda(a)\\
={}&\Theta(\lambda^{-2}b)-\Theta(\lambda^{-2}a) \\
\to {}& 0
\end{align*}
as $\lambda\to 0$ for any $a<b<0$, where $\Theta_\lambda$ is the Gaussian area of the $\lambda$-rescaled flow. We conclude that the integrand vanishes identically in the limit and hence any limit of $\{\lambda \M^n_{\lambda^{-2}t}\}_{t\in(-\infty,0)}$ along a sequence of scales $\lambda_i\to 0$ is a self-similarly shrinking solution to mean curvature flow. 

A well-known result of Huisken \cite{Hu93} (see also Colding and Minicozzi \cite{ColdingMinicozziGeneric}) implies that the only convex examples which can arise are the shrinking spheres, the shrinking cylinders, and the stationary hyperplanes of multiplicity either one or two. If the limit flow is the shrinking sphere, then the solution is compact and hence, by part (i), its `blow up' is also the shrinking sphere. The monotonicity formula then implies that the Gaussian area is constant and we conclude that the solution is the shrinking sphere. Similarly, if the limit is the hyperplane of multiplicity one then the solution is the hyperplane of multiplicity one, since the blow-up about any regular point (without loss of generality, the spacetime origin) is also a hyperplane of multiplicity one (cf. \cite[Proposition 2.10]{White05}). Finally, we note that the limit is unique since convexity ensures that the limiting convex region enclosed by any subsequential limit is contained in the limiting convex region enclosed by any other subsequential limit.

\noindent (iii) This is a consequence of Hamilton's Harnack inequality \cite{HamiltonHarnack}. (We will in fact make use of Andrews' interpretation of the differential Harnack inequality using the Gauss map parametrization \cite{AndrewsHarnack}.) It implies that the mean curvature of a convex ancient solution to mean curvature flow is pointwise non-decreasing in the Gauss map parametrization. In particular, the limit 
\[
H_\infty(e):= \lim_{s\to-\infty}H(P(e,s),s)
\]
exists. Moreover, the curvature of the family of translated flows is uniformly bounded on any compact time interval. It follows that some subsequence converges locally uniformly in $C^\infty$ to a weakly convex eternal limit mean curvature flow. Since the curvature of the limit is constant in time with respect to the Gauss map parametrization, the rigidity case of the Harnack inequality \cite{HamiltonHarnack} implies that it moves by translation, with velocity $-H_\infty(e)e$.
\end{proof}

\begin{remark}
In case (iii) of Lemma \ref{lem:ancient_asymptotics}, the limit will simply be a stationary hyperplane if $\displaystyle\lim_{s\to-\infty}H(P(e,s),s)=0$. We refer to the `forwards' limit in part (i) as the \emph{blow-up} of the solution, the `backwards' limit in part (ii) as its \emph{blow-down} and the translating limits in part (iii) as \emph{asymptotic translators}. In certain cases, the asymptotic translator is unique, and hence the convergence holds for all $s\to-\infty$.
\end{remark}

\section{X.-J.~Wang's dichotomy for convex ancient solutions}\label{sec:Wang's dichotomy ancient}


Recall that the arrival time $u:\cup_{t\in I}\M^n_t\to\R$ of a mean curvature flow $\{\M^n_t\}_{t\in I}$ of mean convex boundaries $\M^n_t=\pd\Omega_t$ is defined by
\[
u(X)=t\;\;\iff\;\; X\in \M^n_t\,.
\]

\begin{lemma}\label{lem:concavity of the arrival time}
The arrival time of a convex ancient solution to mean curvature flow is locally concave.
\end{lemma}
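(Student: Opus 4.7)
The plan is to adapt Wang's concavity-maximum-principle argument for compact convex ancient solutions (Wang \cite[Lemmas 4.1 and 4.4]{Wa11}) to the possibly non-compact setting via a compact exhaustion, and then to pass to the limit using the preservation of concavity under pointwise limits.

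Fix an interior point $p_0$ of the spacetime track and choose $r>0$ small enough that $\overline{B_r(p_0)}\subset\mathrm{int}(\Omega_{u(p_0)-1})$. For each large $k\in\N$, let $K_k$ be a smoothing from the interior of the compact convex body $\Omega_{-k}\cap\overline{B_k(p_0)}$, and let $\{\M^{n,k}_t\}_{t\in[-k,T_k)}$ denote the compact convex mean curvature flow starting from $\partial K_k$ at time $-k$, with arrival time $u^k$. Avoidance gives $\Omega^k_t\subset\Omega_t$ for $t\geq -k$; moreover, the truncation defect in $\partial K_k$ sits at distance $\sim k$ from $p_0$ at time $-k$ with curvature of order $1/k$, so an avoidance/barrier argument combined with interior regularity shows that it does not reach $B_r(p_0)$ on the interval $[-k, u(p_0)]$. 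Consequently $u^k\to u$ locally uniformly on $B_r(p_0)$ as $k\to\infty$. Wang's CMP argument then applies to each $u^k$ (the ancientness assumption plays no role in the CMP step itself), yielding that $w^k_t:=-\log(u^k-t)$ is convex on $\Omega^k_t$ for every $t\in[-k,T_k)$.

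Reproducing Wang's Hessian estimate at $t=-k+1$ gives, at every $q\in B_r(p_0)$,
\[
u^k_{ij}(q)\;\leq\;\frac{4M_k^2}{\delta^2(k-1)^2}\,e^{w^k_{-k+1}(q)}\,\delta_{ij},
\]
with $M_k:=\sup_{B_r(p_0)}(-u^k+|Du^k|)$ uniformly bounded in $k$ (by the local convergence and interior $C^1$-estimates for MCF) and $e^{w^k_{-k+1}(q)}=(u^k(q)+k-1)^{-1}=O(1/k)$ uniformly on $B_r(p_0)$. Hence $D^2 u^k\leq \varepsilon_k\,I$ on $B_r(p_0)$ for some sequence $\varepsilon_k\downarrow 0$, which is to say that $u^k - \tfrac{\varepsilon_k}{2}|q-p_0|^2$ is concave on $B_r(p_0)$. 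Since concavity is preserved under pointwise limits, sending $k\to\infty$ shows that $u$ itself is concave on $B_r(p_0)$, establishing local concavity at $p_0$.

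The principal obstacle is the local uniform convergence $u^k\to u$ on $B_r(p_0)$: one has to combine the avoidance principle (to squeeze the approximate flow against the original one from above) with interior curvature estimates (to confirm that the truncation defect in $\partial K_k$ does not propagate into a neighborhood of $p_0$ within the time interval $[-k, u(p_0)]$). Once this is in hand, Wang's compact-case CMP argument transfers without change and the concavity of $u$ follows from the limit procedure above.
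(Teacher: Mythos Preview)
Your approach is quite different from the paper's, and the step you yourself flag as ``the principal obstacle'' is a genuine gap rather than a routine verification.

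The paper's proof is a direct computation with no approximation and no maximum principle. One checks that
\[
Du=-\frac{1}{H}\,\nu\,,\qquad D^2u=\begin{pmatrix}-A/H & \nabla H/H^2\\ \nabla H/H^2 & -\partial_t H/H^3\end{pmatrix}
\]
in a tangent/normal frame, and then, for $V=V^\top+\alpha\nu$, observes that $-H\,D^2u(V,V)\ge 0$ reduces to $A(V^\top,V^\top)\ge 0$ when $\alpha=0$ and, after rescaling so that $\alpha=-H$, to Hamilton's differential Harnack inequality
\[
\partial_tH+2\nabla_{V^\top}H+A(V^\top,V^\top)\ge 0
\]
for convex ancient solutions. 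This handles the compact and non-compact cases uniformly in three lines.

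Your compact-exhaustion strategy is plausible in outline, and you are right that the CMP step in Wang's argument does not itself use ancientness. But the convergence $u^k\to u$ on $B_r(p_0)$ is not easy. Mean curvature flow has no finite propagation speed, so ``the truncation defect does not reach $B_r(p_0)$'' is not automatic: you need an \emph{inner} barrier forcing $u^k\ge u-o(1)$ near $p_0$, and none of the obvious candidates work. A shrinking sphere inscribed in $K_k$ gives only $u^k(p_0)\ge -k+R^2/(2n)$ with $R$ the inradius of $K_k$ at $p_0$; when the solution lies in a slab (Grim Reaper, ancient pancakes) this $R$ is bounded independently of $k$ and the estimate is vacuous. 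Time-shifts of the original flow go the wrong way, since $\Omega_{-k+\varepsilon}$ is not contained in $B_k(p_0)$ when the solution is non-compact. Your curvature claim ``of order $1/k$'' also ignores the corners where $\partial\Omega_{-k}$ meets $\partial B_k(p_0)$, whose smoothing generically produces large curvature on a small set. A real argument here would need something like pseudolocality or a bespoke barrier tracking the original flow, neither of which you have supplied. The paper in fact remarks, immediately after its proof, that Wang's CMP route is an alternative \emph{in the compact case}; the non-compact case is exactly where your difficulty lies, and the Harnack argument sidesteps it entirely.
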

\begin{proof}
Straightforward calculations show that the arrival time $u$ of a convex ancient solution $\{\M^n_t\}_{t\in(-\infty,T)}$ to mean curvature flow satisfies
\[
Du=-\frac{1}{H}\nu\;\;\text{and}\;\;D^2u=\left(\begin{matrix} -A/H & \cd H/H^2\\ \cd H/H^2 & -\pd_t H/H^3\end{matrix}\right)\,.
\]
Fix a point $p\in \M^n_t$ and any vector $V=V^\top+\alpha \nu(p,t)\in T_p\R^{n+1}$. If $\alpha=0$, then
\bann
-HD^2u(V,V)=A(V,V)\geq 0\,.
\eann
Else, we can scale $V$ so that $\alpha=-H$, in which case the differential Harnack inequality \cite{HamiltonHarnack} yields
\bann
-HD^2u(V,V)=A(V^\top,V^\top)+2\cd_{V^\top}H+\pd_t H\geq 0\,.
\eann
\end{proof}
In the compact case, concavity of the arrival time can also be obtained from Huisken's Theorem \cite{Hu84} and the concavity maximum principle \cite[Lemmas 4.1 and 4.4]{Wa11}.

\begin{definition}
An ancient mean curvature flow $\{\M^n_t\}_{t\in(-\infty,T)}$ of mean convex boundaries $\M^n_t=\pd\Omega_t$ shall be called \emph{entire}\index{entire!ancient solution}\index{ancient solution!entire} if it \emph{sweeps out all of space}, in the sense that $\cup_{t\in(-\infty,T)}\M^n_t=\R^{n+1}$. Equivalently, its arrival time is an entire function.
\end{definition}

Observe that the shrinking sphere and the bowl soliton are entire ancient solutions, whereas the paperclip and Grim Reaper are not entire (they only sweep out `strip' regions between two parallel lines).

Any convex ancient solution which is not entire necessarily lies at all times inside some stationary halfspace. The following remarkable theorem of X.-J.~Wang states that, in fact, such a solution must lie at all times in a stationary \emph{slab} region (the region between two parallel hyperplanes).\index{slab region}

\begin{theorem}[X.-J.~Wang's dichotomy for convex ancient solutions {\cite[Corollary 2.2]{Wa11}}]\label{thm:Wang's dichotomy for ancient solutions}
Let $\{\M^n_t\}_{t\in(-\infty,0)}$ be a convex ancient solution to mean curvature flow in $\R^{n+1}$. If $\{\M^n_t\}_{t\in(-\infty,0)}$ is not entire, then it lies in a stationary slab region\index{Wang's dichotomy!for ancient solutions}.
\end{theorem}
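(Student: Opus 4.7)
The natural strategy is to apply the blow-down Lemma \ref{lem:ancient_asymptotics}(ii) to the convex set $\Omega_\infty := \bigcup_{t<0}\Omega_t$ swept out by the flow, and then to read off the structure of $\Omega_\infty$ from the shape of the blow-down via a splitting argument for convex sets. Since $\{\M^n_t\}_{t\in(-\infty,0)}$ is not entire, $\Omega_\infty$ is a proper open convex subset of $\R^{n+1}$ and hence lies in some halfspace $\{\la x,e\ra\le c\}$. Consequently the rescaled bodies $\lambda\Omega_{\lambda^{-2}t}$ lie in $\{\la x,e\ra\le\lambda c\}$, and the blow-down from Lemma \ref{lem:ancient_asymptotics}(ii) is therefore contained in the limiting halfspace $\{\la x,e\ra\le 0\}$. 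Since neither a shrinking sphere nor any shrinking cylinder is contained in a halfspace, the blow-down must be a stationary hyperplane, which after re-choosing the rotation $R$ we may take to be $P:=\{\la x,e\ra=0\}$, with multiplicity either one or two.

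If the multiplicity is one, then Lemma \ref{lem:ancient_asymptotics}(ii)(b) directly yields that $\{\M^n_t\}$ is a stationary hyperplane, and hence trivially lies in the (degenerate) slab $P$. If instead the multiplicity is two, the rescaled bodies $\lambda\Omega_{\lambda^{-2}t}$ themselves converge, as subsets of $\R^{n+1}$, to $P$. I would identify this body blow-down with the recession cone $\mathrm{rec}(\Omega_\infty):=\lim_{\lambda\to 0}\lambda\Omega_\infty$: the monotonicity $\Omega_{\lambda^{-2}t}\nearrow\Omega_\infty$ as $\lambda\to 0$ allows one to replace $\Omega_{\lambda^{-2}t}$ by $\Omega_\infty$ inside the rescaling limit, and for any closed convex set the rescaled family $\lambda\Omega_\infty$ converges in the Kuratowski sense to its recession cone. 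Thus $\mathrm{rec}(\Omega_\infty)=P$.

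Since $\mathrm{rec}(\Omega_\infty)$ contains the linear subspace $P$, the convex set $\Omega_\infty$ is invariant under translations by $P$ and therefore has the form $\Omega_\infty=\{x\in\R^{n+1}:\la x,e\ra\in S\}$ for some convex $S\subset\R$. Because $\mathrm{rec}(\Omega_\infty)=P$ has no $e$-component, the recession cone of $S$ (as a subset of $\R$) must be trivial, so $S$ is a bounded open interval $(a,c)$, and $\Omega_\infty$ is the slab $\{a<\la x,e\ra<c\}$. Since $\M^n_t\subset\overline{\Omega_\infty}$, the solution is contained in this slab, completing the proof. The technical point I expect to require the most care is the passage from the smooth (or varifold) convergence of the rescaled boundaries furnished by Lemma \ref{lem:ancient_asymptotics}(ii) to the Kuratowski convergence of the rescaled bodies needed for the recession-cone identification; convexity together with the monotonicity of $t\mapsto\Omega_t$ are what make the two notions line up.
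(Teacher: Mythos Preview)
Your argument has a genuine gap precisely at the step where you ``replace $\Omega_{\lambda^{-2}t}$ by $\Omega_\infty$ inside the rescaling limit'' to conclude $\mathrm{rec}(\Omega_\infty)=P$. This is an illegal interchange of limits. From $\Omega_{\lambda^{-2}t}\subset\Omega_\infty$ you do get $\lambda\Omega_{\lambda^{-2}t}\subset\lambda\Omega_\infty$, and hence the inclusion $P\subset\mathrm{rec}(\Omega_\infty)$ (this is the easy direction, and it already gives the splitting $\Omega_\infty=\{x:\la x,e\ra\in S\}$). But the reverse inclusion $\mathrm{rec}(\Omega_\infty)\subset P$, which you need in order to conclude that $S$ is \emph{bounded}, does not follow. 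The monotone convergence $\Omega_{\lambda^{-2}t}\nearrow\Omega_\infty$ gives no control on how fast this exhaustion occurs relative to the spatial scale $\lambda$, so the diagonal limit $\lim_{\lambda\to 0}\lambda\Omega_{\lambda^{-2}t}$ can be strictly smaller than $\lim_{\lambda\to 0}\lambda\Omega_\infty$.

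Concretely: the parabolic blow-down being the multiplicity-two hyperplane $P$ says exactly that the $e$-width $v(0,t)$ of $\Omega_t$ satisfies $v(0,t)=o(\sqrt{-t})$ (this is \eqref{eq:conv1b} in the paper). Nothing in your argument rules out a width growing like $\log(-t)$ or $(-t)^{1/4}$; in such a scenario the parabolic blow-down would still be $P$, yet $\Omega_\infty$ would be a halfspace (so $\mathrm{rec}(\Omega_\infty)$ would be a halfspace, not $P$), and the solution would not lie in any slab. Upgrading $v(0,t)=o(\sqrt{-t})$ to $v(0,t)=O(1)$ is exactly the content of the theorem, and it is what the paper's two Claims accomplish: Claim~\ref{claim:rectangle} gives the area-type lower bound $\vert p\vert\,v(0,t)\ge -\alpha t$, and Claim~\ref{claim:thin} runs an iteration (using concavity of the arrival time and the graphical mean curvature flow equation) to show $v(0,t)$ is uniformly bounded. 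Your recession-cone framework does not circumvent this analysis; it presupposes its conclusion.
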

\begin{proof}
Let $\{\M^n_t\}_{t\in(-\infty,0)}$ be a convex ancient solution which is not entire. After a rotation, we can assume, by Lemma \ref{lem:ancient_asymptotics}, that the rescaled flows $\{\lambda \M^n_{\lambda^{-2}t}\}_{t\in(-\infty,0)}$ converge as $\lambda\to 0$ to the stationary hyperplane $L:= \{x\in \R^{n+1}:x_1=0\}$ with multiplicity $2$. Since the solution is convex, we may represent each timeslice $\M^n_t$ as the union of the graphs of two functions $v^{\pm}(\, \cdot \, ,t)$ over a convex domain $V_t\subset L$. That is,
\[
\M^n_t =\{(v^+(y,t),y): y\in V_t\}\cup\{(v^-(y,t), y): y\in V_t\}\,,
\]
with $v^+(\, \cdot \, ,t):V_t\to\R$ concave, $v^-:V_t\to\R$ convex, and $v^+$ lying above $v^-$. Thus, the function
\[
v:= v^+-v^-
\]
is positive and concave in its space variable at each time. Note also that $v(p,t)=0$ for each $p\in \pd V_t$. We need to show that $v(\, \cdot \, ,t)$ stays uniformly bounded as $t\to -\infty$. 

Since their graphs move by mean curvature flow, the functions $v^+$ and $v^-$ satisfy
\begin{equation}\label{eq:MCFvpm}
\frac{\partial v^\pm}{\partial t}=\sqrt{1+\vert Dv^\pm\vert^2}\dvg\left(\frac{Dv^\pm}{\sqrt{1+\vert Dv^\pm\vert^2}}\right)\,.
\end{equation}

Since the blow-down of the solution is the hyperplane $\{X\in\R^{n+1}:\inner{X}{e_1}=0\}$ of multiplicity two, there exists, for every $\varepsilon>0$, some $t_\varepsilon>-\infty$ such that
\begin{subequations}\label{eq:conv1}
\begin{equation}\label{eq:conv1a}
\vert p\vert\geq \varepsilon^{-1}\sqrt{-t}\;\;\text{for all}\;\; p\in \pd V_t
\end{equation}
and
\begin{equation}\label{eq:conv1b}
v(0,t)\leq \varepsilon\sqrt{-t}
\end{equation}
\end{subequations}
for all $t<t_\varepsilon$.

Since the solution is convex, it suffices, by \eqref{eq:conv1a}, to show that $v(0,t)$ stays bounded as $t\to -\infty$.
We achieve this with the following two claims.

\begin{claim}[{\cite[Claim 1 in Lemmas 2.1 and 2.2, and Lemma 2.6]{Wa11}}]\label{claim:rectangle}
There exists $t_0<0$ and $\alpha>0$ such that
\[
\vert p\vert\, v(0,t)\ge -\alpha t \,\, ( \, > 0 \, )
\]
for each $p\in\pd V_t$ and every $t\le t_0$.
\end{claim}
\begin{proof}
We will use equations \eqref{eq:conv1a} and \eqref{eq:conv1b} to show that the tangent planes to $v^+$ and $v^-$ at the origin are almost horizontal for $t\ll 0$, which will allow us to estimate $\vert p\vert \, v(0,t)$ by the area in the plane $e_1\wedge p$ enclosed by $\M_t$ and $\{\inner{X}{p}=0\}$. The argument is quite simple when $n=1$ and the solution is compact. Removing these hypotheses introduces some technical difficulties, but the idea is essentially the same.

Fix $t<t_\varepsilon$ and $p\in \pd V_t$. By rotating about the $x_1$-axis, we can arrange that $p=de_2$, where $d>0$. Set
\[
q^\pm(t):= v^\pm(0,t) \, e_1\,.
\]
By the convexity/concavity of the respective graphs, the segment connecting $(p,v^+(p))$ to $q^+(t)$ lies below the graph of $v^+(\, \cdot \, ,t)$ and the segment connecting $(p,v^-)$ to 
$q^-(t)$ lies above the graph of $v^-(\, \cdot \, ,t)$. Thus, comparing their slopes with the slope of the tangents to $v^\pm(\, \cdot \, ,t)$ at $0$ and applying \eqref{eq:conv1}, we find that
\begin{equation}\label{eq:tangent}
\pd_2v^+(0,t)-\pd_2v^-(0,t)\ge-\frac{v^+(0,t)-v^-(0,t)}{d}\ge -2\varepsilon^2\,.
\end{equation}
If the ray $\{re_2:r<0\}$ does not intersect $\pd V_t$, then, since $\M^n_t$ is convex,
\[
\pd_2v^+(0,t)-\pd_2v^-(0,t)\leq 0\,,
\]
Else, applying the same argument to the point $p':= \{re_2:r<0\}\cap\pd V_t$, we obtain
\[
\vert \pd_2v^+(0,t)-\pd_2v^-(0,t)\vert\le 2\varepsilon^2\,.
\]
Denote by $\widehat\M_t$ the intersection of $\M_t$ with the plane $e_1\wedge e_2$. Since the normal velocity of $\widehat\M_t$ in $e_1\wedge e_2$ is the projection onto $e_1\wedge e_2$ of $-H\nu$, the inward normal speed $\widehat H$ 
of $\widehat\M_t$
is given by
\[
\widehat H=H\frac{\vert D\widehat u\,\vert}{\vert Du\vert}\,,
\]
where $\widehat u$ is the restriction of the arrival time $u$ to $e_1\wedge e_2$. On the other hand, since $u$ is locally concave, the curvature $\widehat\kappa$ of 
$\widehat\M_t$ 
can be estimated by
\bann
\widehat\kappa={}&-\frac{1}{\vert D\widehat u\,\vert}\left(\Delta\widehat u-\frac{D^2\widehat u \, (D\widehat u,D\widehat u\,)}{\vert D\widehat u\,\vert^2}\right)\\
={}&-\frac{D^2\widehat u\, (\widehat\tau,\widehat\tau)}{\vert D\widehat u\,\vert}\\
={}&-\frac{1}{\vert D\widehat u\,\vert}\left(\Delta u-\frac{D^2u\, (Du,Du)}{\vert Du\vert^2}-\tr_{\widehat\tau{}^\perp}D^2u\right)\\
\leq{}&\frac{\vert Du\vert}{\vert D\widehat u\,\vert}H\,,
\eann
where $\widehat\tau$ is a unit tangent vector field to $\widehat\M_t$ and where $\widehat\tau{}^\perp$ is its orthogonal compliment in $T\M_t$. Thus,
\[
\widehat H\geq \widehat\kappa \, \frac{\vert D\widehat u\,\vert^2}{\vert Du\vert^2}\,.
\]

We claim that $\frac{\vert D\widehat u\,\vert^2}{\vert Du\vert^2}$ is close to $1$. Fix $x\in \widehat\M_t$. For each $k=3,\dots,n+1$, let $z_ke_k$, $z_k\in\R$, be the point at which the tangent plane $T_x\M_t$ intersects the $e_k$-axis (if it exists). Since $\inner{\nu}{z_ke_k-x}=0$, we find, when $\nu_k\neq 0$,
\[
z_k\nu_k=x_1\nu_1+x_2\nu_2\,.
\]
Without loss of generality, $x_1\leq x_2\leq z_k$ for each $k$. Then
\[
x_1^2+x_2^2\leq 2z_k^2
\]
and hence
\[
\nu_k^2\leq 2(\nu_1^2+\nu_2^2)
\]
for each $k=3,\dots,n+1$. Since $\vert\nu\vert^2=1$, this implies that
\[
\nu_1^2+\nu_2^2\geq\frac{1}{2n-1}
\]
and we conclude that
\[
\vert D\widehat u\,\vert^2=\inner{Du}{D\widehat u\,}\geq \frac{1}{2n-1}\vert Du\vert\vert D\widehat u\,\vert\,.
\]
That is,
\[
\frac{\vert D\widehat u\,\vert^2}{\vert Du\vert^2}\geq \frac{1}{(2n-1)^2}\,.
\]

We can now estimate
\bann
\frac{d}{dt}\mathscr{H}^2(\widehat\Omega_t\cap\{\inner{X}{e_2}>0\})={}&-\int_{\widehat\M_t\cap\{\inner{X}{e_2}>0\}}\widehat H\,ds\\
\le{}&-\tfrac{1}{(2n-1)^2}\int_{\widehat\M_t\cap\{\inner{X}{e_2}>0\}}\widehat \kappa\,ds\\
\le{}& -\tfrac{1}{(2n-1)^2}\left(\pi-2\varepsilon^2\right),
\eann
where $\widehat\Omega_t$ is the convex region in $e_1\wedge e_2$ bounded by $\widehat\M_t$. Integrating this between $t<4t_\varepsilon$ and $t_\varepsilon$ yields, upon choosing $\varepsilon=\frac{\sqrt{\pi}}{2}$,
\[
\mathscr{H}^2(\widehat\Omega_t^2\cap\{\inner{X}{e_2}>0\})\ge -\frac{3(\pi-2\varepsilon^2)}{4(2n-1)^2}\, t\ge -\frac{3\pi}{8(2n-1)^2}\, t\,.
\]
The convex region $\widehat\Omega_t$ lies between the tangent hyperplanes to $v^\pm(\, \cdot \, ,t)$ at the origin. By \eqref{eq:tangent}, these tangent hyperplanes intersect the line $\inner{X}{e_2}=d$ in $e_1\wedge e$ at two points with distance at most $2(v^+(0,t)-v^-(0,t))$. Comparing $\mathscr{H}^2(\widehat\Omega_t^2\cap\{\inner{X}{e_2}>0\})$ with the area of this enclosing 
trapezium 
yields
\[
\frac{3}{2}v(0,t) \, \vert p\vert\ge -\frac{3\pi}{8(2n-1)^2}t\,,
\]
which completes the proof of the claim.
\end{proof}

For each $k\in\N$, set
$$t_k:= 2^{k}t_\varepsilon \, , \quad
v_k:= v(\,\cdot\, ,t_k) \, , \quad \text{and} \quad d_k:= \min_{p\in \pd V_{t_k}}\vert p\vert \, ,$$
where $\varepsilon>0$ is to be determined and $t_\varepsilon$ is chosen as in \eqref{eq:conv1}.

\begin{claim}\cite[Claim 2 in Lemmas 2.1 and 2.2, and Lemma 2.7]{Wa11} \label{claim:thin}
There exists $\varepsilon>0$ such that
\begin{equation}\label{eq:thin} 
v_k(0)\le v_{k-1}(0)+2^{-\frac{k}{4n}}\sqrt{-t_\varepsilon}
\end{equation}
for all $k\geq 1$.
\end{claim}
\begin{proof}
Since the arrival time $u$ of $\{\M^n_t\}_{t\in(-\infty,0)}$ is locally concave and since
\[
u(v^\pm(y,t),y)=t
\]
for all $t<0$ and $y\in V_t$, we find that the functions $t\mapsto v^+(y,t)$ and $t\mapsto-v^-(y,t)$, and hence also $t\mapsto v(y,t)$, are concave for each fixed $y$. 

It follows that
\begin{equation}\label{vconcavity}
\frac{d}{dt}\frac{v(y,t)}{-t}\ge 0\;\; \text{for fixed}\;\; y\in V_{t_0}\,\, \text{and every}\,\, t<t_0\,.
\end{equation}
In particular,
\[
\frac{v_k(0)}{-t_k}\le \frac{v_0(0)}{-t_0}
\]
and hence, by \eqref{eq:conv1b},
\[
v_k(0)\le 2^{k}v_0(0)\le 2^{k+1}\varepsilon\sqrt{-t_\varepsilon}\,.
\]

Given $k_0\ge 8n$ (to be determined momentarily), we choose $\varepsilon=\varepsilon(k_0)$ small enough that $2^{k_0+1}\varepsilon\le 2^{-\tfrac {k_0}{4n}}$, so that
\begin{equation}\label{eq:gk0}
v_k(0)\le 2^{-\tfrac {k_0}{4n}}\sqrt{-t_\varepsilon}\le \frac{1}{4}\sqrt {-t_\varepsilon}\;\; \text{for all}\;\; k\le k_0.
\end{equation}
In particular, \eqref{eq:thin} holds for each $k\le k_0$. We will prove the claim by induction on $k$. 

So suppose that \eqref{eq:thin} holds up to some $k\ge k_0$. 
Then
\[
v_k(0)\le v_{k_0}(0)+\sqrt{-t_\varepsilon}\sum_{i=k_0+1}^k 2^{-\frac{i}{4n}}\,,
\]
where the second term on the right hand side is taken to be zero if $k=k_0$. Since $\sum_{j=1}^\infty 2^{-\frac{j}{4n}}<\infty$, we can choose $k_0$ so that $\sum_{j=k_0+1}^\infty 2^{-\frac{j}{4n}}<1/4$. Applying \eqref{eq:gk0}, we then obtain
\begin{equation}\label{eq:gk}
v_k(0)\le \frac12\sqrt{-t_\varepsilon}\,.
\end{equation}
By \eqref{vconcavity},
\[
\frac{v_{k+1}(0)}{-t_{k+1}}\le \frac{v_{k}(0)}{-t_k}
\]
and hence
\[
v_{k+1}(0)\le 2 v_k(0)\le \sqrt{-t_\varepsilon}\,.
\]
Since $t\mapsto v(0,t)$ is decreasing, we conclude that
\[
v(0,t)\le \sqrt{-t_\varepsilon}\;\;\text{for every}\;\; t\ge t_{k+1}\,.
\]

Since $d (t):= \min_{p\in \pd V_t}\vert p\vert$ is decreasing in $t$, Claim \ref{claim:rectangle} implies that there exists $\alpha > 0$ such that
\begin{subequations}\label{eq:ak+1}
\begin{equation}\label{eq:ak+1a}
d_{k+1}\ge d_k\ge \alpha\frac{-t_k} {v_{k}(0)}\ge\alpha\frac{-t_k}{\sqrt{-t_\varepsilon}}
\end{equation}
and
\begin{equation}\label{eq:ak+1b}
d_{k-1}\ge\alpha\frac{-t_{k-1}}{\sqrt{-t_\varepsilon}}=\frac{\alpha}{2}\frac{-t_k}{\sqrt{-t_\varepsilon}}\,.
\end{equation}
\end{subequations}

Define now 
\begin{equation}
D_k:= \left\{y\in\R^n:\vert y\vert<\frac{\alpha}{2}\frac{-t_k}{\sqrt{-t_\varepsilon}}\right\}\,.
\end{equation}
Since $y\mapsto v(y,t)$ is concave for fixed $t$,
\[
v(y+se,t)-v(y,t)\leq sD_ev(y,t)
\]
for any $y\in V_t$, any unit vector $e$ and any $s$ such that $y+se\in V_t$. If $(y,t)\in D_k\times [t_{k+1},t_{k}]$ and the ray $\{y+re:r>0\}$ intersects $\pd V_{t}$, we can choose $s$ so that $y+se\in\pd V_t$ and hence
\[
D_ev(y,t)\geq-\frac{v(y,t)}{s}\geq-\frac{v(y,t)}{d(t)-\vert y\vert}\,.
\]
If, however, the ray $\{y+re:r>0\}$ does not intersect $\pd V_{t}$, then, since $\M^n_t$ is convex, $D_{e}v(y,t)\geq 0$. Applying the same reasoning with $e$ replaced by $-e$ yields
\begin{equation*}
D_ev(y,t)\leq \frac{v(y,t)}{d(t)-\vert y\vert}\,.
\end{equation*}
We conclude the gradient estimate
\begin{equation}\label{eq:Wang's dichotomy claim 2 gradient estimate}
\vert Dv(y,t)\vert\leq \frac{v(y,t)}{d(t)-\vert y\vert}
\end{equation}
for all $(y,t)\in D_k\times[t_{k+1},t_k]$. 

Since $t\mapsto v(y,t)$ is concave for fixed $y$, \eqref{eq:Wang's dichotomy claim 2 gradient estimate} implies
\bann
v(y,t)\leq{}& v(0,t)+\vert y\vert \vert Dv(0,t)\vert\\
\leq{}& v(0,t)\left(1+\frac{\vert y\vert}{d(t)}\right)\\
\leq{}& 2v(0,t)\,.
\eann

These estimates, \eqref{eq:ak+1} and the concavity of $t\mapsto v(y,t)$ yield, for all $(y,t)\in D_k\times[t_{k+1},t_{k}]$,
\begin{equation}\label{eq:x1der}
|Dv(y,t)|\le \frac{v(y,t)}{d_k-|y|}\leq 
4\alpha^{-1}\frac{-t_\varepsilon}{-t_k}
\end{equation}
and
\begin{equation}\label{eq:hder}
0\le-\partial_t v(y,t)\le\frac{v(y,t)}{-t}\le\frac{v_k(y)}{-t_k}\le\frac{2\sqrt{-t_\varepsilon}}{-t_k}\,.
\end{equation}

We will use the gradient estimate to bound $\Delta v(y,t)$ in $D_k\times[t_{k+1},t_{k}]$ in terms of $t_k$. 
Given  a positive function $f:\N\to \R$, to be determined momentarily, define
\[
\chi_k:=\{(y,t)\in D_k\times[t_{k+1}, t_{k}]:-\Delta v(y,h)\ge f(k)\}\,.
\]
Recalling \eqref{eq:x1der} we find, for any $t\in (t_{k+1},t_{k})$,
\begin{align*}
\mathscr{H}^n(\{y\in D_k:(y, t)\in \chi_k\})\, f(k)\le{}&-\int_{D_k}\Delta v(\,\cdot\,,t)\,d\mathscr{H}^n\\
\leq{}&\int_{\pd D_k}\vert Dv\left(\,\cdot\,,t\right)\vert\,d\mathscr{H}^{n-1}\\
\le{}& \sup_{D_k}|Dv(\,\cdot\,,t)| \, \mathscr{H}^{n-1}(D_k)\\
\le{}&C\frac{t_\varepsilon}{t_k}\left(\frac{-t_k}{\sqrt{-t_\varepsilon}}\right)^{n-1}\\
={}&C\frac{2^{k(n-1)}}{-t_k}(-t_\varepsilon)^{\frac{n+1}{2}}\,,
\end{align*}
where $C$ is a constant which depends only on $n$. Integrating between $t_{k+1}$ and $t_k$ then yields
\[
\mathscr{H}^{n+1}(\chi_k)
\le C\frac{2^{k(n-1)}}{f(k)}(-t_\varepsilon)^{\frac{n+1}{2}}\cdot\frac{t_k-t_{k+1}}{-t_k}=2C\frac{2^{k(n-1)}}{f(k)}(-t_\varepsilon)^{\frac{n+1}{2}}\,.
\]

Consider now another positive function $g:\N\to\R$, to be determined later. Since
\begin{align*}
\int_{D_k}\mathscr{H}^1(\chi_k\cap\{ 
(y,t) : y=z 
\})\,d\mathscr{H}^n(z)
= & \, \mathscr{H}^{n+1}(\chi_k) \\
\le & \, 
2C\frac{2^{k(n-1)}}{f(k)}(-t_\varepsilon)^{\frac{n+1}{2}}\,,
\end{align*}
there exists $\widehat D_{k}\subset D_k$ with $$\mathscr{H}^n(\widehat D_{k})\le 2C\frac{2^{k(n-1)}}{f(k)g(k)}(-t_\varepsilon)^{\frac{n+1}{2}}$$ 
such that 
\begin{equation}\label{eq:Lsmall}
\mathscr{H}^1(\chi_k\cap\{ (y,t) : y=z \})\le g(k)\;\;\text{for all}\;\; z\in D_k\setminus \widehat D_{k}\,.
\end{equation}
Now, for any $y\in  D_k\setminus \widehat D_{k}$,
\begin{align}\label{Le}
v_{k+1}(y)-v_k(y)&=-\int_{t_{k+1}}^{t_{k}}\partial_tv(y,s)\,ds\nonumber\\
&=-\int_{[t_{k+1}, t_k]\setminus I_k(y)}\partial _tv(y,s)\,ds-\int_{I_k(y)}\partial _tv(y,s)\,ds\,,
\end{align}
where $I_k(y):=\{t:(y,t)\in \widehat D_k\}$. Using \eqref{eq:hder} and \eqref{eq:Lsmall} to bound the first integral on the right hand side of \eqref{Le} and the graphical mean curvature flow equation \eqref{eq:MCFvpm} to bound the second, we find, for any $y\in D_k\setminus \widehat D_k$,
\begin{equation}\label{Le2}
v_{k+1}(y)-v_k(y)\le \frac{2\sqrt{-t_\varepsilon}}{-t_k}g(k)-f(k)t_k\,.
\end{equation}

We now choose $f(k)= \frac{2^{-k(1+\beta/2)}}{\sqrt{-t_\varepsilon}}$ and $g(k)= 2^{k(1-\beta/2)}(- t_\varepsilon)$, where $\beta\in (0,1)$ will be chosen explicitly below. Then
\bann
\mathscr{H}^n(D_k\setminus \widehat D_k)\ge{}& \omega_n\left(\frac{-\pi t_k}{8\sqrt{-t_\varepsilon}}\right)^n-2^{k\beta+1}C \sqrt{-t_\varepsilon}\left(\frac{-t_k}{\sqrt{-t_\varepsilon}}\right)^{n-1}\\
={}&\left(\omega_n\left(\frac{\pi}{8}\right)^n2^k-2^{k\beta+1}C \right)\sqrt{-t_\varepsilon}\left(\frac{-t_k}{\sqrt{-t_\varepsilon}}\right)^{n-1}.
\eann

Since
\[
\mathscr{H}^n(\widehat D_k)\le 
2^{(\beta+n-1)k+1}C(-t_\varepsilon)^{\frac{n}{2}}\,,
\]
there is a point $y_0\in D_k\setminus \widehat D_k$ with 
\begin{equation}\label{eq: vk y0 le vk 0 exercise}
0<\vert y_0\vert \leq \left(\frac{2C}{\omega_n}\right)^{\frac{1}{n}}2^{k\left(\frac{\beta-1}{n}+1\right)}\sqrt{-t_\varepsilon} \quad \text{and} \quad v_k(y_0)\le v_k(0)\,.
\end{equation}
Since $y\mapsto v(y,t_{k+1})$ is concave and zero on $\pd V_t$, we find
\[
v_{k+1}(0)\le \frac {d_{k+1}}{d_{k+1}-\vert y\vert}v_{k+1}(y_0)\,.
\]

Set $C':= \left(\frac{2C}{\omega_n}\right)^{\frac{1}{n}}$. Then \eqref{eq:ak+1a} implies that
\bann
\frac{d_{k+1}}{d_{k+1}-\vert y\vert}\le{}&\frac{d_{k+1}}{d_{k+1}-2^{\left(\frac{\beta-1}{n}+1\right)k}C'\sqrt{-t_\varepsilon}}\\
={}&1+\frac {2^{\left(\frac{\beta-1}{n}+1\right)k}C'\sqrt{-t_\varepsilon}}{d_{k+1}-2^{\left(\frac{\beta-1}{n}+1\right)k}C'\sqrt{-t_\varepsilon}}\\
\le{}&1+2C'\cdot 2^{\frac{\beta-1}{n}k}
\eann
provided $k_0$ is chosen sufficiently large (depending on $\beta$). Thus,
\[
v_{k+1}(0)\le (1+2C'\cdot 2^{\frac{\beta-1}{n}k}) \, v_{k+1}(y_0)\,.
\]
Applying \eqref{Le2}, estimating $v_k(y_0)\le v_k(0)$ and recalling \eqref{eq:gk}, we conclude that
\[
\begin{split}
v_{k+1}(0)&\le(1+2C'\cdot 2^{\frac{\beta-1}{n}k})(v_{k}(0)+3\sqrt{-t_\varepsilon}\, 2^{-\frac{\beta}{2}k})\\
& \le v_k(0)+\sqrt{-t_\varepsilon}\left(C'\cdot 2^{\frac{\beta-1}{n}k}+3\cdot 2^{-\frac{\beta}{2}k}+6C'\cdot 2^{\left(\frac{\beta-1}{n}-\frac{\beta}{2}\right)k}\right)\,.
\end{split}
\]

Choosing now $\beta:=5/8\in (1/2n,3/4)$, we obtain, provided $k_0$ is sufficiently large,
\[
\begin{split}
v_{k+1}(0)\le{}&v_k(0)+\sqrt{-t_\varepsilon}
\, 2^{-\frac{k}{4n}}\,.
\end{split}
\]
This completes the proof of the claim.
\end{proof}
We conclude that
\[
v(0,2^{k+1}t_\varepsilon ) =
v_{k+1}(0)\leq v_0(0)+C\sum_{j=1}^k2^{-\frac{j}{4n}}\,,
\]
where $C$ is independent of $k$. Since the sum $\sum_{j=1}^\infty2^{-\frac{j}{4n}}$ is finite and $v(0,t)$ is monotone, we conclude that $v(0,t)$ is bounded uniformly in time. 

This completes the proof of Theorem \ref{thm:Wang's dichotomy for ancient solutions}.
\end{proof}

We note that F.~Chini and N.~M\o ller \cite{ChiniMollerAncient} have recently obtained a classification of the \emph{convex hulls} of (non-convex) ancient solutions to mean curvature flow more generally.

Theorem \ref{thm:Wang's dichotomy for ancient solutions} motivates the following definitions.
\begin{definition}
A convex, compact ancient solution to mean curvature flow is called an \emph{ancient ovaloid} if it is entire or an \emph{ancient pancake} if it lies in a stationary slab region.
\end{definition}

The Angenent oval is a well-known example of an ancient pancake, while the shrinking sphere is an example of an ancient ovaloid. Further ancient ovaloids were constructed by White \cite{Wh03}. Further ancient pancakes and ancient ovaloids were constructed by Wang \cite{Wa11}. Haslhofer and Hershkovits gave an explicit construction of a family of bi-symmetric ovaloids \cite{HaHe} and Angenent, Daskalopoulos and \v Se\v sum proved that there is only one (other than the shrinking sphere) which is uniformly two-convex and noncollapsing\footnote{In the sense of Sheng and Wang \cite{ShWa09} (see also \cite{An12,ALM13}).} \cite{ADS1,ADS2}. This is related to work of Brendle and Choi, who proved that the bowl is the only non-compact, strictly convex ancient solution which is uniformly two convex and noncollapsing \cite{BrendleChoi1,BrendleChoi2}. We gave an explicit construction of a rotationally symmetric ancient pancake and (by analyzing the asymptotic translators) proved that it is the only rotationally symmetric example \cite{BLT1}. A survey of these results can be found in \cite{BLTancient}.

\section{Convex ancient solutions to curve shortening flow}

Lemma \ref{lem:ancient_asymptotics} (ii) and Theorem \ref{thm:Wang's dichotomy for ancient solutions} immediately imply that the shrinking spheres are the only convex ancient solutions to curve shortening flow which do not lie in slab regions \cite{Wa11}. By making use of the differential Harnack inequality (as in Lemma \ref{lem:ancient_asymptotics} (iii)), an elementary enclosed area analysis, and Alexandrov's method of moving planes, we were able to obtain a complete classification of convex ancient solutions to curve shortening flow \cite{BLT3}.

\begin{theorem}\label{thm:CSF}
The only convex ancient solutions to curve shortening flow are the stationary lines, the shrinking circles, the Angenent ovals and the Grim Reapers.
\end{theorem}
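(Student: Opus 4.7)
By Lemma~\ref{lem:ancient_asymptotics}(ii) specialized to $n=1$ (so the cylindrical option $k\in\{1,\dots,n-1\}$ is empty), the blow-down of any convex ancient solution to curve shortening flow is either the shrinking circle or a stationary line of multiplicity one or two. Parts~(ii)(a) and~(ii)(b) of that lemma identify the first two cases as the shrinking circle and the stationary line, respectively. In the multiplicity-two case, Theorem~\ref{thm:Wang's dichotomy for ancient solutions} confines the solution to a stationary slab $\mathcal{S}=\{X\in\R^2:|\inner{X}{e_2}|<w/2\}$ of some width $w>0$. It therefore remains to show that compact solutions in a slab are Angenent ovals and non-compact solutions in a slab are Grim Reapers.

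For a non-compact solution in $\mathcal{S}$, I would apply Lemma~\ref{lem:ancient_asymptotics}(iii) to produce an eternal convex limit translator lying in $\mathcal{S}$. Proper convex translators in the plane are known to be either lines or Grim Reapers, and the Grim Reaper filling a slab of width $w$ has translation speed $\pi/w$. The monotonicity of $H$ along the Gauss map parametrization, a consequence of the Harnack inequality, forces $H_\infty(e)$ to coincide with the curvature of the limiting Grim Reaper at each Gauss map direction $e\in S^1$; the rigidity case of Hamilton's Harnack inequality~\cite{HamiltonHarnack} then implies that the solution itself translates, and so must be the Grim Reaper of width $w$.

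For a compact solution in $\mathcal{S}$, Gauss--Bonnet yields $A(t)=A(0)-2\pi t$, so the enclosed area diverges as $t\to-\infty$; combined with $A(t)\leq w\cdot \mathrm{diam}(\M^1_t)$, this forces the curve to develop two tips whose $e_1$-coordinates tend to $\pm\infty$. Applying Lemma~\ref{lem:ancient_asymptotics}(iii) at these tips, with normal directions $\pm e_1$, produces two asymptotic Grim Reapers of slab width $w$ translating in the directions $\pm e_1$. With this asymptotic configuration in hand I would run Alexandrov's moving plane method in the $e_1$ direction: the two Grim Reaper ends are congruent up to a horizontal translation and reflection, so at very negative times the curve is nearly symmetric under reflection through an appropriate vertical line, and the avoidance principle together with the Hopf lemma propagates this into exact reflection symmetry for the entire flow. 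Since each asymptotic Grim Reaper is itself symmetric under reflection through the midplane $\{\inner{X}{e_2}=0\}$ of $\mathcal{S}$, a second application of the moving plane method in the $e_2$ direction yields the analogous symmetry for the full solution. Any doubly symmetric compact convex ancient solution in $\mathcal{S}$ is then uniquely determined by $w$ via a standard phase-plane ODE analysis of the support function, identifying the solution as the Angenent oval of width $w$.

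The main obstacle is executing Alexandrov's moving plane argument in the ancient setting, where the reflection comparison must hold at every time in $(-\infty,0)$ rather than at a single instant, so that the conclusion is a genuine spacetime symmetry. The asymptotic identification of the two Grim Reaper ends furnishes the required initial configuration in the limit $t\to -\infty$, and the avoidance principle together with the strong maximum principle then propagates the symmetry throughout the flow. Once symmetry is established, the remaining ODE uniqueness step is routine.
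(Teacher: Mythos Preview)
Your overall strategy matches the approach the paper attributes to \cite{BLT3}: blow-down classification via Lemma~\ref{lem:ancient_asymptotics}(ii), Wang's dichotomy to reduce to a slab, the differential Harnack inequality to extract asymptotic Grim Reapers, an enclosed-area analysis in the compact case, and Alexandrov reflection to obtain symmetry. The paper does not prove the theorem in the text---it cites \cite{BLT3} and lists exactly these ingredients---so you have identified the right template.

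There is, however, a genuine gap in your treatment of the non-compact slab case. You assert that ``the rigidity case of Hamilton's Harnack inequality then implies that the solution itself translates.'' Harnack rigidity says that a convex ancient solution for which $t\mapsto H(P(e,t),t)$ is \emph{constant} is a translating soliton; you have only established that this map is monotone and has the limit $H_\infty(e)$ as $t\to-\infty$. Monotonicity yields $H(P(e,t),t)\geq H_\infty(e)$, not equality, so rigidity does not apply to the original flow without an additional upper bound on $H$. In \cite{BLT3} this step is handled differently (one also runs the reflection argument in the non-compact case and then carries out a direct analysis, or alternatively one shows the solution is eternal with the supremum of $H$ attained and invokes Hamilton's eternal-solution theorem). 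As written, your step does not go through.

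Two smaller points. First, for a non-compact convex curve the Gauss image is a proper open arc, not all of $S^1$, so ``each $e\in S^1$'' must be restricted accordingly; in particular the direction $e$ to which you apply Lemma~\ref{lem:ancient_asymptotics}(iii) has to be chosen with some care. Second, Lemma~\ref{lem:ancient_asymptotics}(iii) requires the second fundamental form to be bounded on each interval $(-\infty,T]$; this is immediate in the compact case, but in the non-compact slab case it needs a word of justification before you invoke the lemma. In the compact case you should also check that the two asymptotic Grim Reapers actually fill the minimal slab $\mathcal{S}$ (so that their midplanes coincide with that of $\mathcal{S}$) before running the $e_2$-reflection.
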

The classification of \emph{compact} convex ancient solutions to curve shortening flow (shrinking circles and Angenent ovals) was obtained by Daskalopoulos, Hamilton and \v Se\v sum in 2010 \cite{DHS} using different methods.

Sections \ref{sec:shrinking sphere} and \ref{sec:bowl} illustrate how some of these ideas work in higher dimensions.

\section{Rigidity of the shrinking sphere}\label{sec:shrinking sphere}

We will use the asymptotics of the previous section to give a new proof of the following result of Huisken and Sinestrari \cite{HuSi15} (see also Haslhofer and Hershkovits \cite{HaHe} and Wang \cite[Remark 3.1]{Wa11}).

\begin{corollary}
\label{cor:sphererigidity}
Let $\{\M_t^n\}_{t\in(-\infty,0)}$, $n\geq 2$, be a compact, convex ancient solution to mean curvature flow. The following are equivalent:
\begin{enumerate}\item
\label{item:shrinkingsphere} $\{\M_t^n\}_{t\in(-\infty,0)}$ is a shrinking sphere $\{S^n_{\sqrt{-2nt}}(p)\}_{t\in(-\infty,0)}$, $p\in \R^{n+1}$.
\item
\label{item:uniformlypinched} $\{\M_t^n\}_{t\in(-\infty,0)}$ is uniformly pinched:
\[
\liminf_{t\to-\infty}\min_{\M^n_t}\frac{\kappa_1}{H}>0\,.
\]
\item
\label{item:brd} $\{\M_t^n\}_{t\in(-\infty,0)}$ has bounded rescaled \emph{(}extrinsic\emph{)} diameter:
\[
\limsup_{t\to-\infty}\frac{\diam(\M_t)}{\sqrt{-t}}<\infty\,.
\]
\item
\label{item:boundedeccentricity} $\{\M_t^n\}_{t\in(-\infty,0)}$ has bounded eccentricity: 
\[
\limsup_{t\to-\infty}\frac{\rho_+(t)}{\rho_-(t)}<\infty\,,
\]
where $\rho_+(t)$ and $\rho_-(t)$ denote, respectively, the circum- and in-radii of $\M_t^n$.
\item
\label{item:typeI} $\{\M_t^n\}_{t\in(-\infty,0)}$ has Type-I curvature decay:
\[
\limsup_{t\to-\infty}\sqrt{-t}\max_{\M^n_t}H<\infty\,.
\]
\end{enumerate}
\end{corollary}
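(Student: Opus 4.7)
The plan is to deduce (1) from each of (2)--(5) by showing that the blow-down identified in Lemma \ref{lem:ancient_asymptotics}(ii) must be the shrinking sphere, whereupon Lemma \ref{lem:ancient_asymptotics}(ii)(a) gives (1). The implications (1) $\Rightarrow$ (2)--(5) are all immediate computations on $\{S^n_{\sqrt{-2nt}}\}$.

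The implication (3) $\Rightarrow$ (1) is the most direct: if $\diam(\M^n_t)\le C\sqrt{-t}$ for all $t\ll 0$, then the rescaled flows $\{\lambda\M^n_{\lambda^{-2}t}\}$ have uniformly bounded diameter at each fixed negative time, so the blow-down is compact; the shrinking sphere is the only compact possibility in Lemma \ref{lem:ancient_asymptotics}(ii). For (4) $\Rightarrow$ (3), comparing with a shrinking sphere inscribed in $\Omega_t$ (which must extinct no later than the flow itself) yields $\rho_-(t)\le\sqrt{-2nt}$, so $\diam(\M^n_t)\le 2\rho_+(t)\le 2C\rho_-(t)\le 2C\sqrt{-2nt}$.

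For (5) $\Rightarrow$ (3), we translate the flow so that its extinction point is the origin and consider the support function $h(e,s):=\sup_{X\in\M^n_s}\inner{X}{e}=\inner{P(e,s)}{e}$. The envelope theorem, together with the fact that the extremal point has outer normal $e$, gives
\[
\pd_s h(e,s)=-H(P(e,s),s),
\]
so Type-I curvature decay yields $|\pd_s h(e,s)|\le C(-s)^{-1/2}$, and integration gives $h(e,s)\le h(e,s_0)+2C(\sqrt{-s}-\sqrt{-s_0})$ for $s<s_0<0$. Since the extinction point lies inside every $\Omega_t$, $\diam(\M^n_s)\le 2\max_e h(e,s)$, and dividing by $\sqrt{-s}$ produces $\limsup_{s\to-\infty}\diam(\M^n_s)/\sqrt{-s}\le 4C$, which is (3).

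For (2) $\Rightarrow$ (1), the pinching inequality $\kappa_1\ge cH$ is scale-invariant and passes to the smooth blow-down limit. It fails on a shrinking cylinder $\R^k\times S^{n-k}_{\sqrt{-2(n-k)t}}$ with $k\ge 1$, where $\kappa_1=0$ in the $\R^k$ directions while $H>0$; a multiplicity-one hyperplane blow-down would force the flow itself to be a stationary hyperplane by Lemma \ref{lem:ancient_asymptotics}(ii)(b), contradicting compactness; and a multiplicity-two hyperplane blow-down places the flow inside a stationary slab of some finite width $w$ by Theorem \ref{thm:Wang's dichotomy for ancient solutions}, so that $\rho_-(t)\le w/2$ while $\rho_+(t)\ge\sqrt{-2nt}\to\infty$. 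A direct computation on such a pancake-like convex body (say, at a symmetric equator) exhibits points at which one principal curvature is of order $1/w$ while another is of order $1/\rho_+(t)\to 0$, so $\kappa_1/H\to 0$, contradicting pinching. The main technical obstacle is precisely this last pancake calculation: making rigorous the geometric intuition that a compact convex surface confined to a slab but of unboundedly large planar extent must have its pinching ratio degenerate.
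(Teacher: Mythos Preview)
Your treatments of (3), (4), and (5) are correct and essentially match the paper's. Your reduction (4) $\Rightarrow$ (3) via the inscribed-sphere comparison is in fact more direct than the paper's route, which handles (4) ``almost identically to (2)''; both work, but yours avoids the asymptotic-translator machinery entirely for that case.

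The genuine gap is in (2), exactly where you flag it. The ``direct computation on a pancake-like convex body'' is not easy to make rigorous: confinement to a slab does not by itself pin down the principal curvatures at any particular point (a convex curve in a strip can have arbitrarily small curvature along its long sides), and there is no ``symmetric equator'' without a symmetry hypothesis. The paper closes this gap by a different mechanism, invoking Lemma~\ref{lem:ancient_asymptotics}(iii) rather than a static curvature estimate. Once Theorem~\ref{thm:Wang's dichotomy for ancient solutions} places the flow in a slab $\{|x_1|\le w/2\}$, choose a normal direction $e$ with $\langle e,e_1\rangle=0$ and extract an asymptotic translator $\Sigma^n$ along $P(e,s_j)$. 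This $\Sigma^n$ still lies in a slab of width at most $w$; since its normal at the origin is $e\perp e_1$, it cannot be a hyperplane contained in that slab, so by the strong maximum principle $H>0$ everywhere on $\Sigma^n$. The scale-invariant pinching $\kappa_1\ge cH$ passes to $\Sigma^n$ under the smooth convergence, and Hamilton's theorem \cite{HamiltonPinched} then forces $\Sigma^n$ to be compact---impossible for a translator. This replaces your pointwise pancake analysis with a clean limiting argument that you already have all the ingredients for.
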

\begin{proof}
Let $\{\M^n_t\}_{t\in(-\infty,0)}$ be a compact, convex ancient solution satisfying one of the conditions (\ref{item:uniformlypinched})--(\ref{item:typeI}). By Lemma \ref{lem:ancient_asymptotics} (ii-a), it suffices to prove that the blow-down of $\{\M^n_t\}_{t\in(-\infty,0)}$ is the shrinking sphere.

Condition \emph{\eqref{item:uniformlypinched}} (uniform pinching): Since the pinching condition is scale invariant and violated on any shrinking cylinder $\R^k\times S^{n-k}_{\sqrt{-2(n-k)t}}$, $k\in\{1,\dots,n-1\}$, it suffices, by Lemma \ref{lem:ancient_asymptotics} (ii), to rule out the hyperplane of multiplicity two. 

So suppose, to the contrary, that the blow-down is the hyperplane of multiplicity two. Theorem \ref{thm:Wang's dichotomy for ancient solutions} then implies that $\{\M^n_t\}_{t\in(-\infty,0)}$ lies in a slab. By translating parallel to the slab, we obtain, by part (iii) of Lemma \ref{lem:ancient_asymptotics}, an asymptotic translator $\Sigma^n$ which is non-trivial since it lies in a slab orthogonal to its translation vector. Thus, by the strong maximum principle, it must satisfy $H>0$. By the pinching condition, a theorem of Hamilton \cite{HamiltonPinched} then implies that $\Sigma^n$ is compact, which is impossible.


Condition \emph{\eqref{item:brd}} (bounded rescaled diameter) implies that the blow-down is compact. The theorem in this case then follows immediately from part (ii-a) of Lemma \ref{lem:ancient_asymptotics}.

Condition \emph{\eqref{item:boundedeccentricity}} (bounded eccentricity): This case is almost identical to case \emph{\ref{item:uniformlypinched}}. 

Condition \emph{\eqref{item:typeI}} (type-I curvature decay) implies (by integration) that the displacement is bounded by $\sqrt{-t}$. It follows that the blow-down is compact and the theorem again follows immediately from part (ii-a) of Lemma \ref{lem:ancient_asymptotics}.



\end{proof}

\section{Asymptotics for convex translators}



The following lemma describes the asymptotic geometry of convex translators.

\begin{lemma}[Asymptotic shape of convex translators]\label{lem:translator_asymptotics}
Let $\Sigma^n$ be a convex translator in $\R^{n+1}$ and $\{\Sigma_t^n\}_{t\in(-\infty,\infty)}$, where $\Sigma_t^n:= \Sigma^n+te_{n+1}$, the corresponding translating solution to mean curvature flow.
\begin{itemize}
\item[\emph{(i)}] \emph{(Asymptotic shrinker)} There is a rotation $R\in \mathrm{SO}(\R^n\times\{0\})$ about the $x_{n+1}$-axis such that the family of rescaled solutions $\{\lambda R\cdot \Sigma^n_{\lambda^{-2}t}\}_{t\in(-\infty,0)}$ converge locally uniformly in the smooth topology as $\lambda\to 0$ to either
\begin{itemize}
    \item the shrinking cylinder $\{S^{n-k}_{\sqrt{-2(n-k)t}}\times \R^k\}_{t\in(-\infty,0)}$ for some $k\in\{1,\dots,n-1\}$, or
    \item the stationary hyperplane $\{\{0\}\times\R^n\}_{t\in(-\infty,0)}$ of multiplicity either one or two.
\end{itemize}
Moreover, if the limit is the stationary hyperplane of multiplicity one, then $\Sigma^n$ is a vertical hyperplane.
\item[\emph{(ii)}] \emph{(Asymptotic translators\footnote{Cf. \cite[Lemma 2.1]{Ha15} and \cite[Lemma 3.1]{BL17}.})} Given any direction $e\in \pd\nu(\Sigma^n)$ in the boundary of the Gauss image $\nu(\Sigma^n)$ of $\Sigma^n$ and any sequence of points $X_j\in \Sigma^n$ with $\nu(X_j)\to e$, a subsequence of the translated solutions $\Sigma^n-X_j$ converges locally uniformly in the smooth topology to a convex translator which splits off a line (in the direction $w:= \lim_{j\to\infty}\frac{X_j}{\vert X_j\vert}$).
\end{itemize}
\end{lemma}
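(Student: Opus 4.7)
I would apply Lemma~\ref{lem:ancient_asymptotics}(ii) to the translating solution $\{\Sigma^n_t\}$, which is a convex ancient solution. This yields a rotation $\widetilde R\in\mathrm{SO}(n+1)$ under which the rescaled flow converges to a shrinking sphere, a shrinking cylinder, or a hyperplane of multiplicity one or two. The shrinking sphere is ruled out by the rigidity clause (ii)(a), which would force $\{\Sigma^n_t\}$ itself to be a shrinking sphere---incompatible with the translator property. In the multiplicity-one hyperplane case, clause (ii)(b) forces $\Sigma^n$ to equal that hyperplane; combined with the translator equation $H=\inner{e_{n+1}}{\nu}$ and $H\equiv 0$, this gives $\nu\perp e_{n+1}$, i.e.\ verticality. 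To reduce $\widetilde R$ to a rotation fixing $e_{n+1}$, observe that $H=\inner{e_{n+1}}{\nu}\ge 0$ forces $\nu(\Sigma^n)$ into the closed upper hemisphere, a property inherited by the Gauss image of the blow-down via $C^1$-convergence. For a cylindrical limit $S^{n-k}\times V$, the Gauss image is the unit sphere in $V^\perp$, which lies in the closed upper hemisphere iff $e_{n+1}\in V$; an analogous verticality condition applies to the multiplicity-two hyperplane. In either case the limit can be brought to standard form by a rotation $R\in\mathrm{SO}(\R^n\times\{0\})$.

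\textbf{Part~(ii).} Since $\nu(\Sigma^n)$ is open at strictly convex points, the hypothesis $e\in\partial\nu(\Sigma^n)$ forces $|X_j|\to\infty$. The translated hypersurfaces $\Sigma^n-X_j$ are again convex translators; since $H=\inner{e_{n+1}}{\nu}\le 1$ and $0\le\kappa_i\le H$ by convexity, their second fundamental forms are uniformly bounded, and Schauder theory applied to the translator equation in a graphical chart yields uniform bounds on all higher derivatives. Arzel\`a--Ascoli then extracts a locally smooth subsequential limit $\Sigma^n_\infty$, which is a convex translator with $0\in\Sigma^n_\infty$ and $\nu_\infty(0)=e$. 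Pass to a further subsequence so that $w:=\lim X_j/|X_j|$ exists; $w$ is then a recession direction of the convex body $\Omega$ enclosed by $\Sigma^n$, and the two supporting hyperplane inequalities $\inner{\nu(X_j)}{X_j-p}\ge 0$ for $p\in\Omega$ and $\inner{\nu(X_j)}{w}\le 0$ (from $X_j+tw\in\Omega$ for $t\ge 0$) yield, in the limit, $\inner{e}{w}=0$.

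\textbf{Splitting and obstacle.} By $C^1$-convergence, $\nu_\infty(\Sigma^n_\infty)\subseteq\overline{\nu(\Sigma^n)}$, yet $\nu_\infty(0)=e\in\partial\nu(\Sigma^n)$. Hence $\Sigma^n_\infty$ cannot be strictly convex at the origin: otherwise $\nu_\infty$ would be a local diffeomorphism there and carry a neighborhood of $0$ onto a neighborhood of $e$ in the sphere, violating the containment. Thus $A_\infty(0)$ admits a null direction, and Hamilton's strong maximum principle for the second fundamental form on convex mean curvature flow solutions \cite{HamiltonPinched} propagates this null eigenvector along $\Sigma^n_\infty$; on the eternal convex translator this forces a line to split off in the null direction. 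The main obstacle is identifying that null direction with $w$ itself: the argument requires showing that the principal curvature of $\Sigma^n$ at $X_j$ in the tangential component of the recession direction $w$ decays to zero as $|X_j|\to\infty$, so that the limiting degenerate eigenvector of $A_\infty(0)$ is precisely $w$ and the line-splitting occurs along the claimed direction.
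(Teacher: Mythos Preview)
Your treatment of Part~(i) matches the paper's, which simply cites Lemma~\ref{lem:ancient_asymptotics} and declares the rest straightforward; the details you add on ruling out the sphere and on aligning the rotation with the $e_{n+1}$-axis are correct.

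For Part~(ii), your compactness step agrees with the paper, but your splitting argument is genuinely different and, as you yourself flag, incomplete. You infer a null eigenvector of $A_\infty(0)$ from the Gauss-image containment $\nu_\infty(\Sigma^n_\infty)\subseteq\overline{\nu(\Sigma^n)}$ together with $\nu_\infty(0)=e\in\partial\nu(\Sigma^n)$, and then invoke the strong maximum principle to split off a line. This yields splitting in \emph{some} direction, but not in the direction $w$, and the curvature-decay statement you propose for closing the gap is not obviously easier than the original claim.

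The paper bypasses this obstacle entirely with an elementary convexity argument that places the full line $\{sw:s\in\R\}$ inside the limit region directly, with no appeal to the strong maximum principle. After translating so that $X_1=0$, the segments $[0,X_j]\subset\overline\Omega$ give the forward ray $\{sw:s\ge 0\}\subset\overline\Omega$. Convexity then yields
\[
\{rsw+(1-r)X_j:s\ge 0,\;0\le r\le 1\}\subset\overline\Omega,
\]
and hence, writing $s_j=|X_j|$ and $w_j=X_j/s_j$,
\[
\{rsw-rs_jw_j:s\ge 0,\;0\le r\le 1\}\subset\overline\Omega_j:=\overline\Omega-X_j.
\]
Specialising to $s=2s_j$ and to $s=s_j/2$ produces, inside $\overline\Omega_j$, segments of length comparable to $s_j$ whose directions tend to $w$ and to $-w$ respectively as $j\to\infty$. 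Passing to the limit gives $\{sw:s\in\R\}\subset\overline\Omega_\infty$. Since $0\in\partial\Omega_\infty$, convexity forces the whole line into $\Sigma^n_\infty=\partial\Omega_\infty$; thus $\kappa_1$ vanishes along it and the splitting theorem applies with the direction $w$ already identified. This route is both more elementary and complete.
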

\begin{proof}
The first claim is a straightforward consequence of Lemma \ref{lem:ancient_asymptotics} (i). It was proved directly by Wang using a different argument \cite[Theorem 1.3]{Wa11}.

So consider the second claim. Up to a translation, we may assume that $X_1=0$. After passing to a subsequence, we can arrange that $w_j:= X_j/\Vert X_j\Vert\to w$ for some $w\in S^n$. Since each $\Sigma^n_j$ is convex and satisfies the translator equation, the sequence admits the uniform curvature bound
\[
\vert{A}_j\vert\leq H_j=-\inner{\nu_j}{e_{n+1}}\leq 1\,.
\]
It follows, after passing to a further subsequence, that the sequence converges locally uniformly in the smooth topology to a convex translator $\Sigma^n_\infty$. We claim that $\Sigma^n_\infty$ contains the line 
\[
L:= \{sw: s\in \R\}.
\]
First note that the closed convex region $\overline \Omega$ bounded by $\Sigma^n$ contains the ray $\{sw:s\geq 0\}$, since it contains each of the segments $\{sw_j:0\leq s\leq s_j\}$, where $s_j:= \Vert X_j\Vert$. By convexity, $\overline \Omega$ also contains the set $\{rsw+(1-r)X_j:s\geq 0, 0\leq r\leq 1\}$ for each $j$. It follows that the closed convex region $\overline\Omega_j$ bounded by $\Sigma^n_j$ contains the set $\{rsw-rs_jw_j:s>0, 0\leq r\leq 1\}$ since $s_jw_j = X_j$. In particular, choosing $s=2s_j$, $\{\vartheta (w-w_j)+\vartheta w:0\leq \vartheta\leq s_j\}\subset \overline\Omega_j$ and, choosing $s=s_j/2$, 
\[
\{\vartheta w_j-\vartheta (w-w_j):-s_j/2\leq \vartheta\leq 0\}\subset \overline\Omega_j \, .
\]
Taking $j\to\infty$, we find $\{sw:s\in\R\}\subset\overline \Omega_\infty$. It now follows from convexity of $\overline \Omega_\infty$ that $\{sw: s\in \R\}\subset \Sigma^n$. We conclude that $\kappa_1$ reaches zero somewhere on $\Sigma^n_\infty$ and the lemma follows from the splitting theorem.
\end{proof}


In some cases, the asymptotic translators can be shown to depend uniquely on $e\in\pd\nu(\Sigma^n)$, in which case the convergence is independent of the subsequence. 

Note that the asymptotic translator will simply be a vertical hyperplane if $e\perp e_{n+1}$.

\section{X.-J.~Wang's dichotomy for convex translators}



The following theorem is an immediate consequence of Theorem \ref{thm:Wang's dichotomy for ancient solutions}.

\begin{theorem}[X.-J.~Wang's dichotomy for convex translators {\cite[Corollary 2.2]{Wa11}}]\label{thm:Wang's dichotomy translators}
Let $\Sigma$ be a proper, convex translator. If $\Sigma$ is not entire, then it lies in a vertical slab region \emph{(}the region between two parallel vertical hyperplanes\emph{)}.
\end{theorem}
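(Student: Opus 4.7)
The plan is to pass to the associated ancient flow and apply Theorem~\ref{thm:Wang's dichotomy for ancient solutions}, then argue that the stationary slab it produces must be vertical.

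First, I would set up the reduction. The translator $\Sigma$ generates a convex ancient flow $\{\Sigma_t\}_{t\in(-\infty,0)}$ via $\Sigma_t:=\Sigma+te_{n+1}$. Writing $\ov\Omega_t$ for the convex body bounded by $\Sigma_t$, we have $\ov\Omega_t=\ov\Omega_0+te_{n+1}$, and the union of the vertical translates of $\ov\Omega_0$ equals $\pi^{-1}(\pi(\ov\Omega_0))$, where $\pi\colon\R^{n+1}\to\R^n$ is the horizontal projection. Consequently, $\{\Sigma_t\}_{t<0}$ is entire as an ancient flow (equivalently, has entire arrival time) if and only if $\pi(\ov\Omega_0)=\R^n$, i.e., if and only if $\Sigma$ is a graph over $\R^n$, which is exactly the condition that $\Sigma$ be entire as a translator.

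Assuming $\Sigma$ is not entire, Theorem~\ref{thm:Wang's dichotomy for ancient solutions} would produce a stationary slab $S=\{X\in\R^{n+1}:a\le\inner{X}{\nu}\le b\}$ containing $\Sigma_t$ for every $t<0$. Substituting $\Sigma_t=\Sigma+te_{n+1}$ in the containment yields
\[
\inner{X}{\nu}+t\inner{e_{n+1}}{\nu}\in[a,b]\quad\text{for all } X\in\Sigma \text{ and all } t<0.
\]
Letting $t\to-\infty$ forces $\inner{e_{n+1}}{\nu}=0$, so $\nu$ is horizontal and $S$ is a vertical slab. Since $S$ is then invariant under vertical translations, $\Sigma=\Sigma_t-te_{n+1}\subset S$ for any $t<0$, as required.

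No new analysis is needed beyond Theorem~\ref{thm:Wang's dichotomy for ancient solutions}; the only content is (i) matching ``entire translator'' with ``entire ancient flow'' via the projection observation and (ii) the verticality of $S$, which follows immediately from translational invariance. I do not anticipate any substantive obstacle.
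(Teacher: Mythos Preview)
Your proposal is correct and follows exactly the route the paper takes: the paper simply states that Theorem~\ref{thm:Wang's dichotomy translators} is ``an immediate consequence of Theorem~\ref{thm:Wang's dichotomy for ancient solutions}'', and you have supplied the (straightforward) details of that reduction, including the verticality of the slab via translation invariance. One cosmetic remark: to make the ``if and only if'' between entireness of the translator and entireness of the associated ancient flow literally true, run the flow on $(-\infty,\infty)$ rather than $(-\infty,0)$ (only the forward implication is needed anyway).
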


We note that F.~Chini and N.~M\o ller \cite{ChiniMollerTranslators} have recently obtained a classification of the \emph{convex hulls} of the projection onto the subspace orthogonal to the translation direction for general translators.

Theorem \ref{thm:Wang's dichotomy translators} motivates the following definitions.
\begin{definition}
A proper, locally uniformly convex translator is called a \emph{bowloid}\index{bowloid} if it is entire or a \emph{flying wing}\index{flying wing} if lies in a slab region.
\end{definition}


The Grim Reaper curve is a well-known example of a flying wing, while the bowl soliton is an example of a bowloid. Further flying wings and bowloids were constructed by Wang \cite{Wa11}, providing counterexamples to a conjecture of White \cite[Conjecture 2]{Wh03}. We proved that each slab of width at least $\pi$ in $\R^{n+1}$ admits a bi-symmetric\footnote{Namely, $\mathrm{O}(1)\times \mathrm{O}(n-1)$-symmetric.} flying wing. (Note that no slab of width less than $\pi$ admits a convex translator: the Grim hyperplane is a barrier.) Making use of the differential Harnack inequality (cf. Lemma \ref{lem:translator_asymptotics} (iii)), barrier arguments, and Alexandrov's method of moving planes, we also obtained unique asymptotics and reflection symmetry for such translators \cite{BLT2}. Hoffman, Ilmanen, Mart\'in and White constructed an $(n-1)$-parameter family of non-entire translating graphs and an $(n-2)$ parameter family of entire translating graphs in $\R^{n+1}$ for each $n\geq 2$ \cite{HIMW}. It is not yet clear whether or not these examples are convex, however (except when they coincide with the examples in \cite{BLT2}). A survey of these results can be found in \cite{BLTtranslators}.


\section{Rigidity of the bowl soliton}\label{sec:bowl}

We will use the asymptotics of Lemma \ref{lem:translator_asymptotics} in conjunction with the following important result of Haslhofer to obtain rigidity results for the bowl soliton.

\begin{theorem}[R.~Haslhofer \cite{Ha15}]\label{thm:Haslhofer asymptotic analysis}
Let $\Sigma^n$, $n\ge 2$, be a convex, locally uniformly convex translator in $\R^{n+1}$. Suppose that the blow-down of the corresponding translating solution $\{\Sigma^n_t\}_{t\in(-\infty,\infty)}$, $\Sigma_t:= \Sigma^n+te_{n+1}$, is the shrinking cylinder $S^{n-1}_{\sqrt{-2(n-1)t}}\times\R$. Then $\Sigma^n$ is rotationally symmetric about a vertial axis (and hence, up to a translation, the bowl soliton).
\end{theorem}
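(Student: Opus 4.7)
The plan is to combine the cylindrical blow-down hypothesis with Alexandrov's method of moving planes. The cylindrical structure at infinity forces the asymptotic geometry of $\Sigma^n$ to match that of the bowl soliton to high order, and this rigidity is strong enough to run moving planes in every vertical hyperplane through the tip.

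\textbf{Step 1 (graph structure).} By Theorem \ref{thm:Wang's dichotomy translators}, $\Sigma^n$ is either entire or lies in a vertical slab. The slab case is ruled out because the blow-down of a slab-confined translator is a hyperplane of multiplicity two (cf.\ Lemma \ref{lem:translator_asymptotics}(i)), not a cylinder. Hence $\Sigma^n$ is entire, and convexity together with $H>0$ make it a smooth entire graph $x_{n+1}=u(x')$ over $e_{n+1}^\perp\cong\R^n$ with Gauss image the open lower hemisphere $\{\langle\nu,e_{n+1}\rangle<0\}$. The height function attains a unique minimum at the \emph{tip}, which I translate to the origin.

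\textbf{Step 2 (quantitative asymptotics).} The main technical step is to upgrade the cylindrical blow-down from a statement about rescaled timeslices into a pointwise, quantitative statement on $\Sigma^n$ itself. Concretely, I want to prove that the rescaled level set $h^{-1/2}\bigl(\Sigma^n\cap\{x_{n+1}=h\}\bigr)$ converges smoothly to the round sphere $S^{n-1}_{\sqrt{2(n-1)}}$ as $h\to\infty$, with a summable rate of decay of the principal curvatures in that rescaling. This exploits the non-collapsing automatic for convex translators together with parabolic regularity for the translator equation. This is the heart of the argument and the principal obstacle; it amounts to establishing uniqueness and full regularity of the asymptotic profile of $\Sigma^n$ at infinity in every horizontal direction.

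\textbf{Step 3 (moving planes and ODE).} With quantitative rotational symmetry of the level sets at infinity in hand, I would deploy Alexandrov reflection against vertical hyperplanes. Fix any unit vector $e\in e_{n+1}^\perp$ and let $P_s$ be the vertical hyperplane $\{\langle X,e\rangle=s\}$. The reflection of $\Sigma^n$ across $P_s$ satisfies the same translator equation, and by Step 2 it differs from $\Sigma^n$ by an error that vanishes at infinity to leading order. Sliding $s$ from $+\infty$ inward and applying the strong maximum principle for the translator equation (with appropriate boundary behaviour at the tip) forces $\Sigma^n$ to be reflection-symmetric across some $P_{s(e)}$, which must pass through the tip. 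Varying $e$ over the horizontal sphere yields rotational symmetry of $\Sigma^n$ about the vertical axis through the tip. Rotational symmetry reduces the translator equation to the radial ODE $u''/(1+u'^2)+(n-1)u'/r=1$, whose unique solution (up to an additive constant) is the bowl profile.
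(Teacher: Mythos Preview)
The paper itself does not prove this theorem; it simply writes ``See \cite{Ha15}.'' However, a (commented-out) sketch in the source indicates the intended argument, which is Haslhofer's actual method and is quite different from yours: one considers the rotation Jacobi fields $u_J(X)=\langle JX,\nu\rangle$ for $J\in\mathfrak{so}(\R^n\times\{0\})$, notes that both $u_J$ and $H$ satisfy the Jacobi equation on the translator, and applies the maximum principle to the quotient $u_J/H$. The cylindrical blow-down is fed in through a \emph{spectral decay estimate} on the shrinking cylinder (decomposing solutions of the linearized equation into spherical harmonics on $S^{n-1}$), which forces $\sup|u_J/H|\to 0$ along a sequence of heights after subtracting the translation mode $u_p$. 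The maximum principle then gives $u_J\equiv 0$, i.e.\ rotational symmetry. No moving planes enter.

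Your proposal has a genuine gap at the interface of Steps~2 and~3. The blow-down hypothesis tells you that the rescaled level set $h^{-1/2}\bigl(\Sigma^n\cap\{x_{n+1}=h\}-c(h)\bigr)$ converges to a round sphere for \emph{some} center $c(h)\in\R^n$, but it gives you no control on $c(h)$ beyond $|c(h)|=o(\sqrt{h})$. Your Step~2, as stated, asserts convergence of $h^{-1/2}\bigl(\Sigma^n\cap\{x_{n+1}=h\}\bigr)$ to the sphere \emph{centered at the origin}; that is, you are implicitly assuming the centers stay bounded (indeed, converge). This is essentially equivalent to what you are trying to prove: a drift $c(h)$ of order, say, $h^{1/4}$ is perfectly compatible with the cylindrical blow-down but would prevent you from ever initiating the Alexandrov reflection in Step~3 (the reflected surface across any fixed vertical hyperplane would not lie on one side of $\Sigma^n$ near infinity). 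Controlling this drift is exactly the content captured by the translation mode $u_p$ in Haslhofer's spectral analysis, and it is not something that ``parabolic regularity'' or ``non-collapsing'' supplies. So the moving-planes route is not obviously wrong, but your outline skips the step that carries the real difficulty, and the tool you would need to fill it (fine asymptotics modulo translation) is precisely what Haslhofer's Jacobi-field argument is designed to deliver.
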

\begin{proof}
See \cite{Ha15}.
\end{proof}

\begin{corollary}[X.-J.~Wang {\cite[Theorem 1.1]{Wa11}}, J.~Spruck and L.~Xiao \cite{SX}]\label{cor:Wang bowl uniqueness}
Modulo translation, the bowl soliton is the only mean convex, entire translator $\Sigma^2$ in $\R^3$. In particular, it is the only translator which arises as a singularity model for a compact, embedded, mean convex mean curvature flow in $\R^3$.
\end{corollary}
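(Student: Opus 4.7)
The plan is to reduce to the hypotheses of Haslhofer's Theorem~\ref{thm:Haslhofer asymptotic analysis}, whose conclusion is exactly the rotational symmetry we want. Let $\Sigma^2\subset \R^3$ be a mean convex, entire translator and let $\{\Sigma^2_t\}_{t\in\R}$ denote the associated translating solution. Haslhofer's theorem requires two inputs: \textbf{(a)} that $\Sigma^2$ be locally uniformly convex, and \textbf{(b)} that the blow-down of $\{\Sigma^2_t\}$ be the shrinking cylinder $\{S^1_{\sqrt{-2t}}\times\R\}_{t\in(-\infty,0)}$.

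For \textbf{(a)}, I would first invoke the theorem of Spruck and Xiao \cite{SX} that every complete, mean convex translator in $\R^3$ is convex; this is the deepest external input. Given convexity, Hamilton's strong tensor maximum principle, applied to the null space of $A$, implies that either $\Sigma^2$ is locally uniformly convex, or $\Sigma^2$ splits isometrically as $\gamma\times\R$ for some complete convex translating curve $\gamma\subset\R^2$. The only such curves are straight lines (making $\Sigma^2$ a vertical plane, which is not entire) and Grim Reapers (making $\Sigma^2$ a Grim hyperplane, confined to a vertical slab and hence also not entire). Thus entirety forces local uniform convexity.

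For \textbf{(b)}, Lemma~\ref{lem:translator_asymptotics}~(i) gives, after a rotation, three possibilities for the blow-down: the shrinking cylinder, a stationary hyperplane of multiplicity one, or a stationary hyperplane of multiplicity two. The multiplicity-one case forces $\Sigma^2$ itself to be a vertical hyperplane (again by Lemma~\ref{lem:translator_asymptotics}~(i)), contradicting local uniform convexity from \textbf{(a)}. The multiplicity-two case would, by Wang's dichotomy for translators (Theorem~\ref{thm:Wang's dichotomy translators}), force $\Sigma^2$ to lie in a vertical slab, contradicting entirety. Hence the blow-down must be the shrinking cylinder.

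With \textbf{(a)} and \textbf{(b)} in hand, Theorem~\ref{thm:Haslhofer asymptotic analysis} immediately yields that $\Sigma^2$ is rotationally symmetric about a vertical axis, and so coincides, up to translation, with the bowl soliton. The singularity-model assertion then follows because any translating singularity model of a compact, embedded, mean convex mean curvature flow in $\R^3$ is a properly embedded, mean convex translator (by standard results of Huisken--Sinestrari and White) which cannot lie in a slab, thanks to non-collapsing, and is therefore covered by the first part. The main obstacle in this plan is the Spruck--Xiao convexity step, which is an essentially independent deep theorem relying on a Liouville-type analysis of the Gauss image; every other step is a direct application of the structure theory assembled in the previous sections.
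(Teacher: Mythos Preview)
Your proposal is correct and follows essentially the same route as the paper: invoke Spruck--Xiao for convexity, upgrade to local uniform convexity via the splitting theorem and the classification of convex translating curves, then use Lemma~\ref{lem:translator_asymptotics}(i) together with Wang's dichotomy to force the shrinking-cylinder blow-down, and conclude with Theorem~\ref{thm:Haslhofer asymptotic analysis}. The only cosmetic difference is that you spell out the multiplicity-one and multiplicity-two cases separately and add a justification for the singularity-model clause, which the paper leaves implicit.
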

\begin{proof}
By the Spruck--Xiao convexity estimate \cite{SX}, $\Sigma^2$ is actually convex. In fact, it must also be locally unformly convex: else, by the strong maximum principle, it would split off a line; by uniqueness of the Grim Reaper, the result would either be a vertical plane or a Grim plane, neither of which are entire. 

Since $\Sigma^2$ is entire, Lemma \ref{lem:translator_asymptotics} and Wang's dichotomy (Theorem \ref{thm:Wang's dichotomy translators}) imply that its blow-down is the shrinking cylinder, $\{S^{1}_{\sqrt{-2t}}\times\R\}_{t\in(-\infty,0)}$. The claim now follows from Theorem \ref{thm:Haslhofer asymptotic analysis}.
\end{proof}

Corollary \ref{cor:Wang bowl uniqueness} was proved by X.-J.~Wang (in the convex case) by a different argument: making use of the fact that the blow-down is the shrinking cylinder, he was able to obtain, by an iteration argument, the estimate
\[
\vert u(x)-u_0(x)\vert=o(\vert x\vert)
\]
as $\vert x\vert\to\infty$, where $\graph u_0$ is the bowl soliton whose tip coincides with that of $u$. A classical theorem of Bernstein then implies that $u-u_0$ is constant, which yields the claim. See \cite{Wa11}.

\begin{corollary}
\label{cor:bowl uniqueness}
Modulo translation, the bowl soliton is the only convex, non-planar translator $\Sigma^n$ in $\R^{n+1}$, $n\ge 3$, satisfying
\begin{equation}\label{eq:two convexity hypothesis}
\inf_{\Sigma^n}\frac{\kappa_1+\kappa_2}{H}>0\,.
\end{equation}
In particular, it is the only translator which arises as a singularity model for a compact, two-convex (immersed) mean curvature flow in $\R^{n+1}$ when $n\ge 3$.
\end{corollary}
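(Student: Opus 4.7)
My plan is to follow the template of Corollary \ref{cor:Wang bowl uniqueness}, replacing its use of the Spruck--Xiao estimate and its $n=2$ arguments with ones valid for $n\geq 3$ that are driven by the two-convexity hypothesis \eqref{eq:two convexity hypothesis}. The goal is to show that the blow-down of the eternal solution $\{\Sigma^n_t\}_{t\in(-\infty,\infty)}$ is the shrinking cylinder $S^{n-1}_{\sqrt{-2(n-1)t}}\times\R$, whence Theorem \ref{thm:Haslhofer asymptotic analysis} identifies $\Sigma^n$, up to translation, with the bowl soliton.

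The first step is to upgrade convexity to local uniform convexity. If $\kappa_1$ vanishes somewhere on $\Sigma^n$, Hamilton's strong maximum principle for the evolution of $\kappa_1$ under $\{\Sigma^n_t\}$ forces $\kappa_1\equiv 0$ and an isometric splitting $\Sigma^n = \hat\Sigma^{n-1}\times\R v$; the translator equation forces $v\perp e_{n+1}$, so $\hat\Sigma^{n-1}$ is itself a convex translator in $\{v\}^\perp\cong\R^n$. The two-convexity \eqref{eq:two convexity hypothesis} on $\Sigma^n$ descends to the uniform pinching $\hat\kappa_1\geq c\hat H$ on $\hat\Sigma^{n-1}$. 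Since $n\geq 3$ gives $\dim\hat\Sigma^{n-1}\geq 2$, Hamilton's compactness theorem \cite{HamiltonPinched} forces $\hat\Sigma^{n-1}$ to be compact, contradicting its being a translator.

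The second step is to verify that $\Sigma^n$ is entire. Otherwise Theorem \ref{thm:Wang's dichotomy translators} places $\Sigma^n$ in a vertical slab. Given $e\in\pd\nu(\Sigma^n)$ and any sequence $X_j\to\infty$ with $\nu(X_j)\to e$, Lemma \ref{lem:translator_asymptotics}(ii) produces an asymptotic translator $\Sigma^n_\infty$ that splits off a horizontal line. If $e$ is chosen non-equatorial, $\langle e,e_{n+1}\rangle\neq 0$, the translator equation gives $H(0)>0$ on $\Sigma^n_\infty$ and thus $\Sigma^n_\infty$ is not a hyperplane; two-convexity passes to $\Sigma^n_\infty$ in the smooth limit, so Step 1 applied to the splitting translator $\Sigma^n_\infty$ delivers a contradiction. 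The existence of such a non-equatorial $e$ uses the slab geometry together with local uniform convexity: being non-planar and slab-confined, $\Sigma^n$ must be unbounded in some horizontal direction tangent to the slab; sending $X_j$ to infinity along this direction keeps $H(X_j)$ bounded below and hence forces $\nu(X_j)$ to accumulate on non-equatorial limits.

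With $\Sigma^n$ now entire and non-planar, Lemma \ref{lem:translator_asymptotics}(i) forces the blow-down of $\{\Sigma^n_t\}$ to be a shrinking cylinder $S^{n-k}_{\sqrt{-2(n-k)t}}\times\R^k$ for some $k\in\{1,\ldots,n-1\}$. Passing \eqref{eq:two convexity hypothesis} to this limit rules out $k\geq 2$, since then the two smallest principal curvatures of the cylinder both vanish while $H>0$. Hence the blow-down is $S^{n-1}_{\sqrt{-2(n-1)t}}\times\R$, and Theorem \ref{thm:Haslhofer asymptotic analysis} produces the bowl. The assertion about singularity models follows from the convexity estimates and the preservation of two-convexity under the flow, which let blow-up limits inherit the pinching \eqref{eq:two convexity hypothesis}. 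The main technical obstacle is the second step, where one must exclude the degenerate scenario in which every $e\in\pd\nu(\Sigma^n)$ lies on the equator $\{\langle e,e_{n+1}\rangle=0\}$, for then the strong maximum principle would collapse every asymptotic translator to a flat hyperplane on which two-convexity is vacuous.
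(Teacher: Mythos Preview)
Your overall architecture matches the paper's: upgrade to local uniform convexity via Hamilton's compactness theorem, show entireness by producing an asymptotic translator that splits, then feed the resulting cylindrical blow-down into Theorem~\ref{thm:Haslhofer asymptotic analysis}. Your Step 1 is exactly the paper's argument, and your Step 3 (ruling out $k\geq 2$ for the blow-down via the pinching condition) makes explicit what the paper leaves implicit in its final appeal to Haslhofer.

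The gap is in Step 2, and you have correctly identified it as the crux. Your proposed resolution, however, is not rigorous: the assertion that ``sending $X_j$ to infinity along this [horizontal] direction keeps $H(X_j)$ bounded below'' is precisely what needs proof, and it is not clear why it should hold along an arbitrary unbounded horizontal direction. Moreover, the asymptotic translator need not split off a \emph{horizontal} line: Lemma~\ref{lem:translator_asymptotics}(ii) only gives a splitting in the direction $w=\lim X_j/|X_j|$, which for a slab-confined translator will generally have a nonzero $e_{n+1}$-component (the height grows along the wings). You need $w\neq \pm e_{n+1}$ in order to conclude that the cross section is a genuine translator (hence noncompact), and this in turn requires $\langle e,e_{n+1}\rangle<0$, which is exactly the non-equatoriality you are trying to establish.

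The paper sidesteps both issues with one clean choice: rather than seeking a non-equatorial $e$ directly, it selects $e\in\partial\nu(\Sigma^n)$ with $\langle e,e_1\rangle=0$, where $e_1$ is the slab normal. Since the translates $\Sigma^n-X_j$ remain confined to a slab of fixed width orthogonal to $e_1$, so does the limit; but a hyperplane with normal $e\perp e_1$ cannot lie in such a slab. Hence the limit is not a hyperplane, forcing $H(0)>0$ and therefore $\langle e,e_{n+1}\rangle<0$ a posteriori. This simultaneously guarantees that the splitting direction $w$ is not $\pm e_{n+1}$, so the cross section is a noncompact translator with uniform pinching $\hat\kappa_1\geq cH$, and Hamilton's theorem finishes the contradiction. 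Replacing your heuristic with this choice of $e$ closes the gap.
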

\begin{proof}
First note that any convex, non-planar translator satisfying \eqref{eq:two convexity hypothesis} is automatically locally uniformly convex. Indeed, if this were not the case, then it would split off a line; but the complimentary subspace $\hat\Sigma^{n-1}$ cannot be compact (up to a rotation and scaling, it is a translator) and must satisfy $\inf_{\hat\Sigma^{n-1}}\kappa_1/H>0$, contradicting Hamilton's compactness theorem \cite{HamiltonPinched}.

We claim that $\Sigma^n$ is entire. Suppose, to the contrary, that this is not the case. Then, by Wang's dichotomy (Theorem \ref{thm:Wang's dichotomy translators}), it lies in a slab. Choose $e\in \pd\nu(\Sigma)$ with $\inner{e}{e_1}=0$. By Lemma \ref{lem:translator_asymptotics} (ii), we can find a sequence of points $X_j\in \Sigma^n$ such that the translates $\Sigma^n-X_j$ converge locally uniformly in the smooth topology to a convex translator which splits off a line parallel to the slab and has normal $e$ at the origin. Since $\inner{e_1}{e}=0$, the limit cannot be a vertical hyperplane. It follows that the limit has positive mean curvature. Moreover, since $\inner{e}{e_{n+1}}<0$, the cross section of the splitting cannot be compact. But this contradicts the two-convexity hypothesis \eqref{eq:two convexity hypothesis} by Hamilton's compactness theorem \cite{HamiltonPinched}. The claim now follows from  Haslhofer's analysis (Theorem \ref{thm:Haslhofer asymptotic analysis}).
\end{proof}

To our knowledge, Corollary \ref{cor:bowl uniqueness} was only previously known under additional hypotheses, such as noncollapsing \cite{Ha15} or certain cylindrical and gradient estimates \cite{BL17}.

\bibliographystyle{acm}
\bibliography{../../bibliography}

\end{document}